\documentclass[leqno,CJK]{siamltex704}
\usepackage{amssymb,amsmath,graphicx,amscd,mathrsfs}
\usepackage{color,xcolor,comment,dsfont,pifont}
\usepackage{longtable,graphicx,float,amsfonts,amssymb}
\usepackage{hyperref,multirow,placeins}
\numberwithin{equation}{section}
\def\3bar{{|\hspace{-.02in}|\hspace{-.02in}|}}
\def\E{{\mathcal{E}}}
\def\T{{\mathcal{T}}}
\def\Q{{\mathcal{Q}}}
\def\dQ{{\mathbb{Q}}}

\def\b0{\boldsymbol{0}}

\def\sumT{\sum_{T\in\mathcal{T}_h}}     

\def\bw{{\mathbf{w}}}
\def\bu{{\mathbf{u}}}
\def\bv{{\mathbf{v}}}
\def\bl{{\mathbf{L}}}

\def\bh{{\mathbf{H}}}
\def\bi{{\mathbf{I}}}
\def\bn{{\mathbf{n}}}
\def\be{{\mathbf{e}}}
\def\bf{{\mathbf{f}}}

\def\bP{{\mathbf{P}}}

\def\bx{{\mathbf{x}}}

\newtheorem{theo}{Theorem}[section]
\newtheorem{coro}{Corollary}[section]
\newtheorem{example}{Example}[section]

\newtheorem{algorithm}{Algorithm}

\newcommand{\eps}{\varepsilon}
\newcommand{\di}{\text{div}}

\newcommand{\Real}{\mathbb{R}}

\newcommand{\trb}[1]{|\!|\!|#1|\!|\!|}
\newcommand{\la}{\langle}
\newcommand{\ra}{\rangle_{\partial T}}

\allowdisplaybreaks 

\setlength{\parindent}{0.25in} \setlength{\parskip}{0.08in}
\begin{document}
	
	\title{The two-grid weak Galerkin method and enriched Crouzeix-Raviart element method for linear elastic eigenvalue problems}
	\author{
	Wei Lu and Qilong Zhai
		\thanks{Department of Mathematics, Jilin University, Changchun,
			China (diql15@mails.jlu.edu.cn). }
		}
	\maketitle
	
	\begin{abstract}
	In this paper, we present a two-gird skill to accelerate the weak Galerkin method. By the proper use of parameters, the two-grid weak Galerkin method not only doubles the convergence rate, but also maintains the asymptotic lower bounds property of the weak Galerkin (WG) method. Moreover, we propose an enriched Crouzeix-Raviart (ECR) scheme, which can also provide lower bounds for the linear elastic eigenvalue problems.
	\end{abstract}
	
	\begin{keywords}
		weak Galerkin method, linear elastic eigenvalue problem, locking-free, two-gird method, lower bounds, enriched Crouzeix-Raviart method.
	\end{keywords}
	
	\begin{AMS}
		Primary, 65N30, 65N15, 65N12, 74N20; Secondary, 35B45, 35J50, 35J35
	\end{AMS}

	\section{Introduction}
The eigenvalue problem, especially linear elastic eigenvalue problem, has attracted extensive attention, due to its wide applications in science and engineering \cite{Vogelius1983}. The finite element method (FEM), as an efficient approach to solve PDEs, has been applied to solve many types of eigenvalue problem, such as the Laplacian eigenvalue problem \cite{Liu2013,Grebenkov2013,Luo2012}, Stokes eigenvalue problem \cite{Feng2014,Chen2009} and biharmonic eigenvalue problem \cite{Oden2007c,Gallistl2014a}. Nevertheless, there exists two difficulties when  FEM is applied to solve linear elastic eigenvalue problem. The first one is, when the Poisson ratio $\nu$ is close to $\frac{1}{2}$ ,the the elastic meterials become nearly imcompressible, then the finite element solution may do not converge to the exact solution, which is called “locking” phenomenon \cite{Ainsworth2022,Babuska1992,Ciarlet2022,Bren1992}. The second one is, as discussed in \cite{MR1115240}, standard conforming FEM can only provide upper bounds for eigenvalues due to the minimun-maximum priciple. Therefore, since the eigenvalues are all real numbers, it is important to obtain the lower bounds for eigenvalues in order to get the accurate interval which the exact eigenvalues belong to \cite{Luo2012}.

“Locking” is not a difficult issue to deal with. Recently, many “locking-free” methods have been proposed. For example, mixed methods \cite{Meddahi2013,Lepe2022,Inzunza2023},  nonconforming FEM \cite{Bi2023,Zhang2021,Zhang2024,Zhang2023}, discontinuous Galerkin (DG) methods \cite{MR4542511,Lepe2019}, virtual element method (VEM) \cite{Amigo2023,Zhao2022} and so on. However, many of them fail to provide lower bounds for eigenvalues.

Compared to overcome “Locking”, it is more difficult to obtain the lower bounds for eigenvalues. An efficient way is the construction of nonconforming FEM. Armentano et al. \cite{AD04} obtained asymptotic lower bounds for Laplacian eigenvalue problem by nonconforming Crouzeix-Raviart (CR) element. Later on, Lin et al. \cite{Lin2013a} get asymptotic lower bounds for Laplacian eigenvalue problem by nonconforming ECR element and $EQ_1^{rot}$ element. Furthermore, Xie et al. used CR and ECR element to solve Stokes eigenvalue problem, and managed to obtain explicit lower bounds. In \cite{Zhang2020}, Zhang et al. obtained guaranteed lower bounds for linear elastic eigenvalue problem by the ues of CR element. Frustratingly, it seems to be difficult to construct high order element.

WG method, as a novel FEM proposed in \cite{Wang2013a}, is able to over come the both difficulties mentioned above. It features in the application of weak functions and weak differential operators. Additionally, WG method adopts discontinuous piecewise polynomials on polygonal finite element partitions, making it easy to construct high order element and can be extended to high dimension cases. So far, WG method has been successfully employed to solve various kinds of eigenvalueproblems. For instance, the Laplacian eigenvalue problem \cite{Zhai2019c,Zhai2019b}, Stokes eigenvalue problem \cite{Zhai2023} and Steklov eigenvalue problem \cite{li2024}. But It is worth mentioning that, solving eigenvalue problem costs more time than the corresponding boundary value problem, since it is a semilinear problem actually. Thus, it is necessary to an effective way accelerate the solving speed.

Two-grid method is an efficient skill to solve nonlinear methods. It saves time by solving a nonlinear problem on a coarse grid, and then solve a linear system on a much finer grid. Since propose in \cite{Xu1994}, the two-grid method has been applied to various kinds of problems such as elliptic eigenvalue problems \cite{Yang2011,Zhai2019b}, Stokes eigenvalue problems \cite{Feng2014,Xie2015a} and linear elastic eigenvalue problems \cite{Zhang2024,Zhang2021,Bi2023}.

In this paper, we combine WG method with two-grid method to solve the linear elastic eigenvalue problem. In this way, it can not only save lots of time, but also maintain the locking-free property of WG method \cite{Wang2015a,Huo2024}. What's more, we will show that, by the proper use of mesh sizes, the two-grid WG method can provide asymptotic lower bounds for eigenvalues. Additionally, we propose a mixed method, then solve it by ECR element and based on some results, we obtain asymptotic lower bounds for eigenvalues successfully. In the end, numerical examples will be provided.

The rest of this paper is constructed as follows. In Section 2 we introduce the WG method for the linear elastic eigenvalue problem, and state some basic error estimates. In Section 3, we define some negative norms, and give the corresponding error estimates for the WG method. Section 4 is devoted to the two-gird method. An ECR finite element scheme will be analyzed in Section 5. In the final section, we present some numerical experiments to verify our theoretical analysis.

	\section{A standard discretization of weak Galerkin method}In this section, we state some notations, introduce the standard WG scheme for elastic eigenvalue problem and present some results. Throughout this paper, we always use $C$ to represent a constant independent of Lam\'{e} parameters $\lambda$, and mesh sizes $H$ and $h$, which may have different values accroding to the occurrence. For simplicity, we use $a\lesssim b$ and $a\gtrsim b$ instead of $a\le Cb$ and $a\ge Cb$, respectively.

The standard Sobolev space notations are used in this paper. Let $\Omega\subset\Real^2$ be a bounded domain with $\partial\Omega=\Gamma_D\cup\Gamma_N$, and $\bh^m(\Omega)$ be the Sobolev space. The notations $(\cdot, \cdot)_{m,D}$, $||\cdot||_{m,D}$ and $|\cdot|_{m,D}$ are used as inner-product, norms and seminorms on $\bh^m(D)$, if the region $D$ is an edge of some elements, we use $\langle\cdot,\cdot\rangle _{m,D}$ instead of $(\cdot,\cdot)_{m,D}$. For simplicity, We shall drop the subscript when $m=0$ or $D=\Omega$. Define $\bh_E^1(\Omega)=\{\bv\in\bh^1(\Omega):\bv|_{\Gamma_D}=\b0\}$.

Consider the following linear elastic eigenvalue problem:
\begin{equation}\label{eig}
	\left\{
	\begin{array}{rcl}
		-\nabla\cdot \sigma(\bu) &=& \gamma \bu,\quad \text{in }\Omega,\\
		\bu &=& \b0,\quad~~ \text{on }\Gamma_D,\\
		\sigma(\bu)\bn &=& \b0,\quad~~ \text{on }\Gamma_N,\\
		\int_\Omega \bu^2d\Omega &=& 1,
	\end{array}
	\right.
\end{equation}
where $|\Gamma_D|>0$, $\bn$ is the unit outward normal vector of $\Gamma_N$. The stress tensor $\sigma(\bu)$ is given by
\begin{eqnarray*}
	\sigma(\bu)=2\mu\eps(\bu)+\lambda(\nabla\cdot\bu)\bi,
\end{eqnarray*}
where $\bi\in\Real^{2\times 2}$ is the identity matrix. The strain tensor $\eps(\bu)$ is defined as
\begin{eqnarray*}
	 \eps(\bu)=\frac{1}{2}(\nabla\bu+(\nabla\bu)^T).
\end{eqnarray*}
The Lam\'{e} parameters $\mu$ and $\lambda$ are given by
\begin{eqnarray*}
	\lambda=\frac{E\nu}{(1+\nu)(1-2\nu)}\ \ \ \ \ {\rm and}\ \ \ \ \ \mu=\frac{E}{2(1+\nu)},
\end{eqnarray*}
where $E$ denotes the Young's modulus and $\nu\in(0,0.5)$ is the Poisson ratio.

Let $\T_h$ be a partition of the domain $\Omega$, and the elements
in $\T_h$ are polygons satisfying the regular assumptions specified
in \cite{Wang2014a}. Let $\E_h$ be the edges in $\T_h$, and
$\E_h^0$ denotes by the interior edges $\E_h\backslash\partial\Omega$. For each edge $e\in\E_h^0$, let $\bn_e$ be the unit normal of $e$ pointing from $T^{+}$ to $T^{-}$, the jump of a function $\bv$ through $e$, denoted by $[\![\bv]\!]$, is given by $[\![\bv]\!]|_e=(\bv|_{T^{+}})|_e-(\bv|_{T^{-}})|_e$. For each element $T\in\T_h$, $h_T$ represents the diameter of $T$, and $h=\max\limits_{T\in\T_h} h_T$ denotes the mesh size.

Now we introduce a WG scheme for the eigenvalue problem \eqref{eig}. For a given integer $k\ge1$, define the WG finite element space
\begin{eqnarray*}
	V_h=\big\{\bv=\{\bv_0,\bv_b\}:\bv_0|_T\in \bP_k(T), \bv_b|_e\in \bP_k(e), \forall T\in\T_h, e\in\E_h,\text{ and } \bv_b=\b0 \text{ on }\Gamma_D\big\},
\end{eqnarray*}

Define the sum space $V=V_h+\bh_E^1(\Omega)$. For each $\bv\in V$, we define its weak gradient
$\nabla_w\bv$ and weak strain tensor $\eps_w(\bv)$ as follows.
\begin{definition}
	$\nabla_w\bv|_T$ is the unique polynomial in $[P_{k-1}(T)]^{2\times 2}$ satisfying
	\begin{eqnarray}\label{defw1}
		(\nabla_w\bv,q)_T=-(\bv_0,\nabla\cdot q)_T+\langle \bv_b,q\bn
		\rangle_{\partial T},\quad\forall q\in [P_{k-1}(T)]^{2\times 2},
	\end{eqnarray}
	where $\bn$ denotes the outward unit normal vector and define
	\begin{eqnarray}\label{defw2}
		\eps_w(\bv)=\frac{1}{2}(\nabla_w\bv+(\nabla_w\bv)^T).
	\end{eqnarray}
\end{definition}

For each $\bv\in V$, we define its weak divergence
$\nabla_w\cdot\bv$ as follows.
\begin{definition}
	$\nabla_w\cdot v|_T$ is the unique polynomial in $P_{k-1}(T)$ satisfying
	\begin{eqnarray}\label{defw3}
		(\nabla_w\cdot\bv,\tau)_T=-(\bv_0,\nabla\tau)_T+\langle \bv_b\cdot\bn,\tau
		\rangle_{\partial T},\quad\forall\tau\in P_{k-1}(T),
	\end{eqnarray}
	where $\bn$ denotes the outward unit normal vector.
\end{definition}

For the aim of analysis, some projection operators are also employed in this paper. For each $T\in\T_h$, let $Q_0$
denotes the $L^2$ projection from $\bl^2(T)$ onto $\bP_k(T)$, 
$\dQ_h$
denotes the $L^2$ projection from $[L^2(T)]^{2\times2}$ onto $[P_{k-1}(T)]^{2\times 2}$,
and $\Q_h$ denotes the $L^2$ projection from $L^2(T)$ onto $P_{k-1}(T)$.
For each $e\in\E_h$, let $Q_b$ denotes the $L^2$ projection from $\bl^2(e)$ onto $\bP_k(e)$ for each $e\in\E_h$.
Combining $Q_0$ and $Q_b$ together, we define $Q_h=\{Q_0,Q_b\}$,
which is a projection onto $V_h$.

Next we define three bilinear forms on $V_h$. For any $\bv_h,\bw_h\in V_h$,
\begin{align*}
	s(\bv_h,\bw_h)=&\sumT h_T^{-1+\delta}\langle \bv_0-\bv_b, \bw_0-\bw_b\rangle_{\partial T},\\
	a_w(\bv_h,\bw_h)=&2\mu(\eps_w(\bv),\eps_w(\bw))+\lambda(\nabla_w\cdot \bv,\nabla_w\cdot\bw)+s(\bv,\bw),\\
	b_w(\bv_h,\bw_h)=&(\bv_0,\bw_0),
\end{align*}
where $0<\delta<1$ is a small constant.

Define the following norms on $V_h$ that
\begin{align*}
	|||\bv_h|||^2=a_w(\bv_h,\bv_h),\quad\forall\bv_h\in V_h.
\end{align*}

For the simplicity of notation, we introduce a semi-norm $\|\cdot\|_b$ by
\begin{align*}
	\|\bv_h\|_b^2=b_w(\bv_h,\bv_h),\quad\forall\bv_h\in V_h.
\end{align*}

With these preparations we can give the following WG algorithm.
\begin{algorithm}\label{31}
	Find $\bu_h\in V_h$ and $\gamma_h\in\Real$ such that $\|\bu_h\|_b=1$ and
	\begin{eqnarray}\label{WGscheme}
		a_w(\bu_h,\bv)=\gamma_h b_w(\bu_h,\bv_h),\quad\forall \bv_h\in V_h.
	\end{eqnarray}
\end{algorithm}

For the analysis in this paper, we introduce the following norm on $V$ that
\begin{align*}
	\|\bv\|_V^2=\sumT\|\eps(\bv_0)\|_T^2 + \lambda\sumT\|\nabla_w\cdot\bv\|_T^2 +\sumT h_T^{-1}\|\bv_0-\bv_b\|_{\partial T}^2.
\end{align*}
and the dual norm of $\|\cdot\|_V$ as follows
\begin{align*}
	\|\bv_h\|_{-V}=\sup_{\bw_h\in V, \bw_h\neq\b0}\frac{b_w(\bv_h,\bw_h)}{\|\bw_h\|_V}.
\end{align*}

For the standard WG scheme, the following convergence theorem holds true, and which also gives a lower bound estimate.
\begin{theo}\label{11}
	Suppose $\gamma_{j,h}$ is the j-th eigenvalue of \eqref{WGscheme} and $\bu_{j,h}$ is the corresponding eigenfunction. There exists an exact eigenfuntion $\bu_j$ corresponding to the j-th exact eigenvalue $\gamma_j$ such that the following error estimates hold
	\begin{align*}
		h^{2k}\|\bu_j\|_{k+1}&\lesssim\gamma_j-\gamma_{j,h}\lesssim h^{2k-2\delta}(\|\bu_j\|_{k+1}+\lambda\|\nabla\cdot\bu_j\|_k),\\
		\|\bu_j-\bu_{j,h}\|_V&\lesssim h^{k-\delta}(\|\bu_j\|_{k+1}+\lambda\|\nabla\cdot\bu_j\|_k),\\
		\|\bu_j-\bu_{j,h}\|_b&\lesssim h^{k+1-\delta}(\|\bu_j\|_{k+1}+\lambda\|\nabla\cdot\bu_j\|_k),
	\end{align*}
	when $\bu_j\in\bh_E^{k+1}(\Omega)$ and $h$ is sufficient small.
\end{theo}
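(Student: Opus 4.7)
The plan is to follow the standard Babu\v{s}ka--Osborn roadmap adapted to the nonconforming weak Galerkin setting: (i) derive sharp error estimates for the auxiliary source problem, (ii) translate these into spectral estimates via norm convergence of the solution operators, and (iii) verify the asymptotic lower bound through an Armentano--Dur\'an-type identity. Throughout I will exploit the fact that $\lambda$ enters only through $\lambda\|\nabla\cdot\bu\|_k$ to keep the analysis locking-free.

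First, I would analyze the source problem: given $\bf\in\bl^2(\Omega)$, find $\bu\in\bh_E^1(\Omega)$ solving $-\nabla\cdot\sigma(\bu)=\bf$, along with its WG discretization $a_w(\bu_h,\bv_h)=(\bf,\bv_0)$. Using coercivity of $a_w$ on $V_h$ (which is uniform in $\lambda$), the commuting properties of $Q_h$ with $\nabla_w$ and $\nabla_w\cdot$, and standard trace/inverse inequalities on the polygonal partition, I expect to obtain
\begin{align*}
\|\bu-\bu_h\|_V\lesssim h^{k-\delta}\bigl(\|\bu\|_{k+1}+\lambda\|\nabla\cdot\bu\|_k\bigr),
\end{align*}
where the $h^{-\delta}$ factor is the price paid for the weakened stabilization scaling $h_T^{-1+\delta}$. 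An Aubin--Nitsche duality argument that exploits $\bh^2$-regularity of the elasticity source problem on $\Omega$ then yields the $\|\cdot\|_b$-estimate $\|\bu-\bu_h\|_b\lesssim h^{k+1-\delta}(\|\bu\|_{k+1}+\lambda\|\nabla\cdot\bu\|_k)$.

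Second, I would introduce the compact solution operator $T:\bl^2\to\bl^2$ for the continuous source problem and its discrete counterpart $T_h$. The source-problem estimates above give $\|T-T_h\|_{\bl^2\to\bl^2}\lesssim h^{k+1-\delta}\to 0$, which places the problem squarely in Osborn's framework; the eigenfunction estimates $\|\bu_j-\bu_{j,h}\|_V$ and $\|\bu_j-\bu_{j,h}\|_b$ in the theorem are then direct corollaries. For the eigenvalue upper bound I would use the generalized identity
\begin{align*}
\gamma_j-\gamma_{j,h}=|||\bu_j-\bu_{j,h}|||^2-\gamma_j\|\bu_j-\bu_{j,h}\|_b^2+\mathcal{R}_h(\bu_j;\bu_{j,h}),
\end{align*}
where $\mathcal{R}_h$ is the WG consistency remainder accounting for the fact that $\bu_j\in\bh_E^1$ is not in $V_h$ and that $\nabla_w\neq\nabla$ pointwise. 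Bounding each term by the source-problem estimates delivers the upper rate $h^{2k-2\delta}$.

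The main obstacle is the asymptotic lower bound $h^{2k}\|\bu_j\|_{k+1}\lesssim\gamma_j-\gamma_{j,h}$. The plan is to show that in the identity above the energy term $|||\bu_j-\bu_{j,h}|||^2$ dominates: the $\|\cdot\|_b^2$ piece is of order $h^{2k+2-2\delta}$, hence strictly higher-order for any $0<\delta<1$ and $k\ge 1$, and I expect $\mathcal{R}_h$ to be controllable by $h\cdot\|\bu_j-\bu_{j,h}\|_V\cdot\|\bu_j\|_{k+1}$ via standard jump/trace identities, which again is of higher order than $h^{2k}$. The delicate step is then to bound $|||\bu_j-\bu_{j,h}|||^2$ from below by $h^{2k}\|\bu_j\|_{k+1}$ through a saturation-type argument: because the stabilization weight is $h_T^{-1+\delta}$ rather than the natural $h_T^{-1}$, the stabilization contribution to $|||\bu_j-Q_h\bu_j|||$ cannot cancel the consistent part, so the energy error retains full order $h^{k-\delta/2}$ at least. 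Verifying that this saturation lower bound is uniform in $\lambda$ and that all remainder terms are absorbable for $h$ sufficiently small is the core technical difficulty I anticipate.
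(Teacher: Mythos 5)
The paper itself does not prove Theorem \ref{11}: it is stated as a known convergence result imported from the existing literature on WG methods for linear elasticity eigenvalue problems, so there is no in-paper argument to compare yours against line by line. Your roadmap --- source-problem estimates, the Babu\v{s}ka--Osborn framework, and an Armentano--Dur\'an-type identity to control the sign of $\gamma_j-\gamma_{j,h}$ --- is the standard route by which such statements are established, and the rates you predict for the source problem ($h^{k-\delta}$ in $\|\cdot\|_V$, $h^{k+1-\delta}$ in $\|\cdot\|_b$ after duality, with the $h^{-\delta}$ loss traced to the $h_T^{-1+\delta}$ stabilization weight) are consistent with Theorem \ref{35} and with the stated eigenfunction estimates.

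The genuine gap is in the lower-bound step. You claim the consistency remainder satisfies $\mathcal{R}_h\lesssim h\,\|\bu_j-\bu_{j,h}\|_V\,\|\bu_j\|_{k+1}$ and that this is ``of higher order than $h^{2k}$.'' With $\|\bu_j-\bu_{j,h}\|_V\lesssim h^{k-\delta}$ this gives $\mathcal{R}_h=O(h^{k+1-\delta})$, and $k+1-\delta>2k$ would require $k<1-\delta$, which fails for every admissible $k\ge 1$; already for $k=1$ you are comparing $h^{2-\delta}$ against $h^{2}$, so the remainder cannot be absorbed and the argument does not close. The mechanism that actually makes the weakened stabilization deliver a lower bound is more structural: one expands $\gamma_j-\gamma_{j,h}$ so that the dominant positive term is the projection error $\|\eps(\bu_j)-\dQ_h\eps(\bu_j)\|^2\gtrsim h^{2k}$ (under a non-saturation assumption on $\bu_j$, which you should state explicitly), the stabilization of the interpolant contributes only $s(Q_h\bu_j,Q_h\bu_j)\lesssim h^{2k+\delta}\|\bu_j\|_{k+1}^2$ --- genuinely higher order precisely because of the exponent $-1+\delta$ --- and the remaining cross/consistency terms must be shown to be $o(h^{2k})$ by exploiting their specific form (for instance a duality or negative-norm argument of the kind developed in Section 3), not by a crude Cauchy--Schwarz against the energy error. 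The $b$-norm term at $O(h^{2k+2-2\delta})$ is indeed harmless, but the consistency terms are where the proof lives, and as written your sketch would not yield $h^{2k}\|\bu_j\|_{k+1}\lesssim\gamma_j-\gamma_{j,h}$.
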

	
	\section{Error estimate in negative norm}In this section, we analyze the $\|\cdot\|_V$ error estimate for the WG scheme \eqref{WGscheme}. First, we need to establish the $\|\cdot\|_V$ error estimate for the corresponding boundary value problem. Consider the following linear elaticity equation
\begin{equation}\label{def}
	\left\{
	\begin{array}{rcl}
		-\nabla\cdot \sigma(\bu) &=& \bf,\quad \text{in }\Omega,\\
		\bu &=& 0,\quad \text{on }\Gamma_D,\\
		\sigma(\bu)\bn &=& 0,\quad \text{on }\Gamma_N,
	\end{array}
	\right.
\end{equation}
where $\bf\in\bl^2(\Omega)$.

The WG method is adopted to solve \eqref{def}. For analysis, we define the follwoing norm
\begin{align*}
 	\trb{\bv_h}_{-1}=\sup_{\bw_h\in V, \bw_h\neq\b0}\frac{b_w(\bv_h,\bw_h)}{\trb\bw_h}.
\end{align*}
 
It is easy to check that $\|\cdot\|_V$ is equivalent to $\|\cdot\|_1$ on the space $\bh_E^1(\Omega)$. The relationship between $\|\cdot\|_V$ and $\trb\cdot$ has been discussed in \cite{Wang2013a}, which is presented as follows.
\begin{lemma}
	For any $\bv\in V_h$, there has
	\begin{align*}
		\trb\bv_h\lesssim\|\bv_h\|_V\lesssim h^{-\frac{\delta}{2}}\trb\bv_h.
	\end{align*}
\end{lemma}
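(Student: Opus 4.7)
The plan is to prove the two inequalities term by term, comparing the pieces that make up $\trb\cdot$ (namely $2\mu\|\eps_w(\bv)\|^2$, $\lambda\|\nabla_w\cdot\bv\|^2$, and the stabilizer $s(\bv,\bv)=\sum_T h_T^{-1+\delta}\|\bv_0-\bv_b\|_{\partial T}^2$) with the pieces of $\|\cdot\|_V$ (namely $\sum_T\|\eps(\bv_0)\|_T^2$, $\lambda\sum_T\|\nabla_w\cdot\bv\|_T^2$, and $\sum_T h_T^{-1}\|\bv_0-\bv_b\|_{\partial T}^2$). The divergence contributions already match exactly, so the whole game reduces to two issues: (i) relating $\eps_w(\bv)$ to the piecewise classical strain $\eps(\bv_0)$, and (ii) transferring between the weights $h_T^{-1+\delta}$ and $h_T^{-1}$ on the boundary jump, which is where the $h^{-\delta/2}$ loss will appear.

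For step (i), the key identity is obtained from the definition of the weak gradient: integrating by parts on each $T$ against a test $q\in[P_{k-1}(T)]^{2\times 2}$ gives
\begin{equation*}
(\nabla_w\bv-\nabla\bv_0,q)_T=\langle \bv_b-\bv_0, q\bn\rangle_{\partial T}.
\end{equation*}
Since $\nabla\bv_0\in[P_{k-1}(T)]^{2\times 2}$, I can set $q=\nabla_w\bv-\nabla\bv_0$ and combine the standard trace and inverse inequalities to conclude $\|\nabla_w\bv-\nabla\bv_0\|_T\lesssim h_T^{-1/2}\|\bv_0-\bv_b\|_{\partial T}$, and hence, by symmetrization, $\|\eps_w(\bv)-\eps(\bv_0)\|_T\lesssim h_T^{-1/2}\|\bv_0-\bv_b\|_{\partial T}$. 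Using the triangle inequality in both directions then yields the pointwise-on-$T$ equivalence
\begin{equation*}
\|\eps_w(\bv)\|_T^2 \;\simeq\; \|\eps(\bv_0)\|_T^2 + h_T^{-1}\|\bv_0-\bv_b\|_{\partial T}^2
\end{equation*}
up to uniform constants.

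For step (ii), a simple scaling remark does the job in both directions. For the lower bound on $\trb{\bv_h}$, note that $h_T^{-1+\delta}=h_T^\delta\cdot h_T^{-1}\le h^\delta\cdot h_T^{-1}\lesssim h_T^{-1}$ whenever $h\le 1$, so $s(\bv,\bv)\lesssim\sum_T h_T^{-1}\|\bv_0-\bv_b\|_{\partial T}^2$; coupled with the equivalence from step (i), this gives $\trb{\bv_h}\lesssim\|\bv_h\|_V$. For the upper bound, I write $h_T^{-1}=h_T^{-\delta}\cdot h_T^{-1+\delta}\le h^{-\delta}\cdot h_T^{-1+\delta}$, so
\begin{equation*}
\sum_T h_T^{-1}\|\bv_0-\bv_b\|_{\partial T}^2\;\le\;h^{-\delta}\,s(\bv,\bv)\;\le\;h^{-\delta}\trb{\bv_h}^2,
\end{equation*}
and the strain and divergence contributions in $\|\bv_h\|_V^2$ are controlled by $\trb{\bv_h}^2$ plus this same boundary term via step (i). Collecting gives $\|\bv_h\|_V^2\lesssim h^{-\delta}\trb{\bv_h}^2$, which is the desired second estimate after taking square roots.

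The only non-cosmetic obstacle is the equivalence in step (i); everything else is a weight-shuffling exercise. The weak-gradient identity and the trace/inverse inequalities used here are standard tools on shape-regular polygonal partitions, and the constants they produce are independent of $\lambda$ and $h$, which is essential since the assertion is stated with the $\lesssim$ symbol (constants independent of the Lamé parameter).
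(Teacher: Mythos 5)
Your strategy is sound, and it is worth noting that the paper itself offers no proof of this lemma at all --- it simply points to the original WG reference --- so what you have reconstructed is essentially the standard argument one would find there. Step (i) is exactly right: the identity $(\nabla_w\bv-\nabla\bv_0,q)_T=\langle\bv_b-\bv_0,q\bn\rangle_{\partial T}$, the admissible choice $q=\nabla_w\bv-\nabla\bv_0\in[P_{k-1}(T)]^{2\times2}$, and the discrete trace inequality give $\|\nabla_w\bv-\nabla\bv_0\|_T\lesssim h_T^{-1/2}\|\bv_0-\bv_b\|_{\partial T}$, and passing to the symmetric part is harmless. One cosmetic point: the two-sided ``equivalence'' $\|\eps_w(\bv)\|_T^2\simeq\|\eps(\bv_0)\|_T^2+h_T^{-1}\|\bv_0-\bv_b\|_{\partial T}^2$ is false in the $\gtrsim$ direction (take $\bv_0=\b0$ with $\bv_b$ chosen so that $\nabla_w\bv=0$: the left side vanishes, the right side does not). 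What is true, and what you actually use downstream, are the two one-sided bounds obtained after adding $h_T^{-1}\|\bv_0-\bv_b\|_{\partial T}^2$ to both sides; state it that way.

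The substantive issue is in step (ii). You write $h_T^{-1}=h_T^{-\delta}\,h_T^{-1+\delta}\le h^{-\delta}\,h_T^{-1+\delta}$, which requires $h_T^{-\delta}\le h^{-\delta}$, i.e.\ $h_T\ge h$. Since $h=\max_{T}h_T$, this inequality points the wrong way on any non-uniform mesh; unconditionally you only obtain $\sum_T h_T^{-1}\|\bv_0-\bv_b\|_{\partial T}^2\le(\min_T h_T)^{-\delta}\,s(\bv_h,\bv_h)$. To recover the stated factor $h^{-\delta/2}$ you must invoke quasi-uniformity, $\min_T h_T\gtrsim h$, which the paper's shape-regularity assumptions do not by themselves supply (although its numerical meshes are uniform, and the lemma as stated genuinely needs this hypothesis: a single very small element carrying the whole jump defeats the bound). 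Say explicitly that you are assuming quasi-uniformity. The opposite direction, $h_T^{-1+\delta}\le h^{\delta}h_T^{-1}\lesssim h_T^{-1}$, is fine as written since $h_T\le h$. Finally, your handling of the constants is correct where it matters: the $\lambda$-weighted divergence terms appear identically in both norms, and the factors of $\mu$ are bounded above and below in terms of the fixed Young's modulus, so only $\lambda$-independence is at stake and it is preserved.
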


The WG method for the boundary value problem \eqref{def} can be described as follows:
\begin{algorithm}Find $\bu_h\in V_h$ such that
	\begin{align}\label{WGsheme2}
		a_w(\bu_h,\bv_h)=b_w(\bf,\bv_h),\quad\forall\bv_h\in V_h.
	\end{align}
\end{algorithm}

Suppose $\bu$ is the exact solution for \eqref{def} and $\bu_h$ is the corresponding numerical solution of \eqref{WGsheme2}. Denote by $\be_h$ the error that
\begin{align*}
	\be_h=Q_h\bu-\bu_h=\{Q_0\bu-\bu_0,Q_b\bu-\bu_b\}.
\end{align*}

Then $\be_h$ satisfies the following equation.
\begin{lemma}\label{33}
	For the error $\be_h$ defined above, we have
	\begin{eqnarray}\label{errequ}
		a_w(\be_h,\bv_h)=\varphi(\bu,\bv_h)+\xi(\bu,\bv_h)+s(Q_h\bu,\bv_h),\quad\forall \bv_h\in V_h,
	\end{eqnarray}
	where
	\begin{align*}
		\varphi(\bu,\bv_h)&=2\mu\sum\limits_{T\in\mathcal{T}_h}\la \bv_0-\bv_b,(\varepsilon(\bu)-\dQ_h(\varepsilon(\bu)))\bn\ra,
		\\
		\xi(\bu,\bv_h)&=\lambda\sum\limits_{T\in\mathcal{T}_h}\la (\bv_0-\bv_b)\cdot\bn,\di\bu-\Q_h(\di\bu)\ra.
	\end{align*}
	Moreover, we have
	\begin{align*}
		a_w(Q_h\bu,\bv_h)=\varphi(\bu,\bv_h)+\xi(\bu,\bv_h)+s(Q_h\bu,\bv_h)+b_w(\bf,\bv_h),\quad\forall \bv_h\in V_h.
	\end{align*}
\end{lemma}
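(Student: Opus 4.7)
The plan is to prove the second identity first (for $a_w(Q_h\bu,\bv_h)$) and then obtain the error equation by subtracting the WG scheme \eqref{WGsheme2}. The key algebraic engine will be the commuting property of the $L^2$ projections with the weak differential operators, combined with two rounds of integration by parts.

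First I would establish the projection identities $\eps_w(Q_h\bu)|_T=\dQ_h\eps(\bu)$ and $\nabla_w\cdot Q_h\bu|_T=\Q_h(\nabla\cdot\bu)$. These follow from the defining equations \eqref{defw1} and \eqref{defw3} by substituting $\{\bv_0,\bv_b\}=\{Q_0\bu,Q_b\bu\}$ and invoking the projection property: since the test tensor $q\in[P_{k-1}(T)]^{2\times 2}$ has $\nabla\cdot q\in[P_{k-2}(T)]^2\subset[P_k(T)]^2$ and $q\bn|_e\in[P_{k-1}(e)]^2\subset[P_k(e)]^2$, the $Q_0$ and $Q_b$ projections can be dropped, leaving the classical IBP identity that characterizes $\dQ_h\eps(\bu)$ (analogously for the divergence).

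Second, I would insert these identities into $a_w(Q_h\bu,\bv_h)$ and use the orthogonality of $\dQ_h$ and $\Q_h$ against the polynomial-valued $\eps_w(\bv_h)$ and $\nabla_w\cdot\bv_h$ to write
\[
2\mu\sumT(\dQ_h\eps(\bu),\eps_w(\bv_h))_T+\lambda\sumT(\Q_h\nabla\cdot\bu,\nabla_w\cdot\bv_h)_T
=2\mu\sumT(\eps(\bu),\nabla_w\bv_h)_T+\lambda\sumT(\nabla\cdot\bu,\nabla_w\cdot\bv_h)_T.
\]
Now I would apply the definitions \eqref{defw1} and \eqref{defw3} to the second factors with test functions $\dQ_h\eps(\bu)$ and $\Q_h(\nabla\cdot\bu)$ respectively, then perform an integration by parts inside each $T$ to move back to $(\eps(\bv_0),\eps(\bu))_T$ and $(\nabla\cdot\bv_0,\nabla\cdot\bu)_T$. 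The two applications differ precisely by $\dQ_h\eps(\bu)-\eps(\bu)$ and $\Q_h(\nabla\cdot\bu)-\nabla\cdot\bu$ on the boundary, producing exactly the terms $\varphi(\bu,\bv_h)$ and $\xi(\bu,\bv_h)$ defined in the lemma.

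Third, one more integration by parts on $(\eps(\bv_0),\eps(\bu))_T$ and $(\nabla\cdot\bv_0,\nabla\cdot\bu)_T$ moves derivatives off $\bv_0$, yielding volume contributions
\[
-(\bv_0,\,2\mu\nabla\cdot\eps(\bu)+\lambda\nabla(\nabla\cdot\bu))_T=(\bv_0,\bf)_T
\]
after recognizing $-\nabla\cdot\sigma(\bu)=\bf$, which sums to $b_w(\bf,\bv_h)$. The remaining boundary contributions pair $\bv_b$ with $\sigma(\bu)\bn$; summing over $\T_h$ these cancel on interior edges by single-valuedness of $\bv_b$ against opposite normals, vanish on $\Gamma_D$ since $\bv_b=\b0$ there, and vanish on $\Gamma_N$ by the traction boundary condition. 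Finally, subtracting \eqref{WGsheme2} from the resulting identity cancels $b_w(\bf,\bv_h)$ and gives the stated error equation \eqref{errequ}.

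The only delicate step is the bookkeeping in the third phase: one must carefully separate the boundary residuals into the $\bv_0-\bv_b$ part (which feeds into $\varphi$ and $\xi$) and the $\bv_b$ part (which must cancel globally), and verify that $\mu$ and $\lambda$ contributions combine into exactly $\sigma(\bu)\bn$ so that the Neumann condition can be invoked. Everything else is routine projection/IBP manipulation.
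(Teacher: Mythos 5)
Your proposal is correct. The paper actually states Lemma \ref{33} without proof, but your argument is precisely the canonical WG error-equation derivation that the result rests on: establish the commuting identities $\eps_w(Q_h\bu)=\dQ_h\eps(\bu)$ and $\nabla_w\cdot Q_h\bu=\Q_h(\nabla\cdot\bu)$, unwind the weak-operator definitions against the polynomial test tensors, integrate by parts twice, use $-\nabla\cdot\sigma(\bu)=\bf$ together with the global cancellation of the $\langle\sigma(\bu)\bn,\bv_b\rangle_{\partial T}$ terms (single-valued $\bv_b$, $\bv_b=\b0$ on $\Gamma_D$, $\sigma(\bu)\bn=\b0$ on $\Gamma_N$), and subtract the scheme \eqref{WGsheme2}; your bookkeeping of how the $2\mu(\eps(\bu)-\dQ_h\eps(\bu))\bn$ and $\lambda(\nabla\cdot\bu-\Q_h(\nabla\cdot\bu))\bn$ residuals pair with $\bv_0-\bv_b$ to form $\varphi$ and $\xi$ is exactly right.
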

\begin{theo}\label{35}
	Assume the exact solution $\bu$ of \eqref{def} satisfies $\bu\in\bh^{k+1}(\Omega)$ and $\bu_h$ is the numerical solution of the WG scheme \eqref{WGsheme2}. Then the following error estimates holds true,
\begin{align*}
	\trb{Q_h\bu-\bu_h}\lesssim h^{k-\frac{\delta}{2}}(\|\bu\|_{k+1}+\lambda\|\nabla\cdot\bu\|_k).
\end{align*}
\end{theo}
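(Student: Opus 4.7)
The plan is to test the error equation \eqref{errequ} from Lemma~\ref{33} against $\bv_h = \be_h = Q_h\bu - \bu_h$, recognize the left-hand side as $\trb{\be_h}^2$, and then bound each of the three right-hand-side contributions by $C\,h^{k-\delta/2}(\|\bu\|_{k+1} + \lambda\|\nabla\cdot\bu\|_k)\,\trb{\be_h}$. Dividing through by $\trb{\be_h}$ will then close the estimate. Note that this is sharper than what one would obtain by combining Theorem~\ref{11} with the norm equivalence $\trb{\bv_h}\lesssim\|\bv_h\|_V$, which is precisely why a direct treatment via the error equation is needed.

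For the two consistency terms $\varphi(\bu,\be_h)$ and $\xi(\bu,\be_h)$, the standard trick is to use a weighted Cauchy--Schwarz splitting the edge integrand as $h_T^{-(1-\delta)/2}(\bv_0-\bv_b) \cdot h_T^{(1-\delta)/2}(\,\cdot\,)$. The first factor is controlled by $s(\be_h,\be_h)^{1/2} \le \trb{\be_h}$. The second factor is estimated by the trace inequality followed by standard $L^2$-projection approximation on $\dQ_h$ and $\Q_h$, yielding
\begin{align*}
\sum_{T\in\T_h} h_T^{1-\delta}\|(\varepsilon(\bu)-\dQ_h\varepsilon(\bu))\bn\|_{\partial T}^2 &\lesssim h^{2k-\delta}\|\bu\|_{k+1}^2,\\
\sum_{T\in\T_h} h_T^{1-\delta}\|\nabla\cdot\bu-\Q_h(\nabla\cdot\bu)\|_{\partial T}^2 &\lesssim h^{2k-\delta}\|\nabla\cdot\bu\|_k^2.
\end{align*}
This produces $|\varphi(\bu,\be_h)|\lesssim h^{k-\delta/2}\|\bu\|_{k+1}\trb{\be_h}$ and $|\xi(\bu,\be_h)|\lesssim \lambda h^{k-\delta/2}\|\nabla\cdot\bu\|_k\trb{\be_h}$.

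For the stabilizer contribution $s(Q_h\bu,\be_h)$, a Cauchy--Schwarz directly inside the $s$-form and the projection estimate $\|Q_0\bu - Q_b\bu\|_{\partial T}\lesssim h_T^{k+1/2}\|\bu\|_{k+1,T}$ give $|s(Q_h\bu,\be_h)|\lesssim h^{k+\delta/2}\|\bu\|_{k+1}\trb{\be_h}$, which is higher order and is absorbed into the previous bounds for $h<1$.

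The only real obstacle is bookkeeping of the $\lambda$-dependence: to keep the final bound locking-free in the form stated, I must not hide $\lambda$ inside $\trb{\be_h}$ when handling $\xi$, but rather carry it along as an explicit prefactor so that $\|\nabla\cdot\bu\|_k$ (rather than $\sqrt{\lambda}\|\nabla\cdot\bu\|_k$) appears in the final right-hand side. Apart from that, the argument is a routine energy estimate combining trace, inverse, and approximation inequalities, and the $h^{-\delta/2}$ loss is the inevitable byproduct of the stabilizer weight $h_T^{-1+\delta}$ in $s(\cdot,\cdot)$.
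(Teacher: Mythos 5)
Your argument is correct and is exactly the route the paper intends: Theorem \ref{35} is stated without proof, but Lemma \ref{33} is set up precisely so that one tests the error equation \eqref{errequ} with $\bv_h=\be_h$ and bounds $\varphi$, $\xi$ and $s(Q_h\bu,\cdot)$ by the weighted Cauchy--Schwarz split you describe. Your bookkeeping is sound --- the $h_T^{(1-\delta)/2}$ weighting matched to the stabilizer is the source of the $h^{-\delta/2}$ loss, the trace-plus-projection bounds give the stated orders, and keeping $\lambda$ as an explicit prefactor in $\xi$ (controlling the jump factor by $s(\be_h,\be_h)^{1/2}\le\trb{\be_h}$ rather than by the $\lambda$-weighted divergence term) yields the locking-free form with $\lambda\|\nabla\cdot\bu\|_k$ as claimed.
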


Now, we come to estimate the error $\be_h$ in the norm $\trb{\cdot}_{-1}$. We suppose the partition $\T_h$ is a triangulation, instead of an arbitrary polytopal mesh. The following lemma is crucial in our analysis.
\begin{theo}\label{crucial}
	For each $\bv_h\in V_h$, we have
	\begin{align*}
		\sumT\|\nabla\bv_0\|_T^2 \lesssim \sumT\|\eps(\bv_0)\|_T^2+ \sumT h_T^{-1}\|\bv_0-\bv_b\|_{\partial T}^2 .
	\end{align*}
	Furthermore, we have
	\begin{align*}
		\sumT\|\nabla\bv_0\|_T^2 + \sumT h_T^{-1}\|\bv_0-\bv_b\|_{\partial T}^2\lesssim \|\bv_h\|_V^2.
	\end{align*}
\end{theo}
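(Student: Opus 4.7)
The plan is to recognize this as a discrete (broken) Korn inequality for weak Galerkin functions and to reduce it to the broken Korn inequality of Brenner for piecewise $H^1$ vector fields. Indeed, since each $\bv_0|_T\in \bP_k(T)^2$, the component $\bv_0$ can be viewed as a piecewise polynomial (broken $H^1$) function on $\T_h$, and the term $\|\bv_0-\bv_b\|_{\partial T}$ plays the role of a jump indicator, because on any interior edge $e=\partial T^+\cap \partial T^-$ one has the telescoping identity
\begin{equation*}
[\![\bv_0]\!]|_e = (\bv_0|_{T^+}-\bv_b) - (\bv_0|_{T^-}-\bv_b),
\end{equation*}
so $\|[\![\bv_0]\!]\|_e^2 \lesssim \|\bv_0|_{T^+}-\bv_b\|_e^2 + \|\bv_0|_{T^-}-\bv_b\|_e^2$, and on any Dirichlet edge $e\subset\Gamma_D$ we have $\bv_b=0$, so $\|\bv_0\|_e = \|\bv_0-\bv_b\|_e$. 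This explains the assumption that $\T_h$ is a triangulation: Brenner's broken Korn inequality is proved in that setting.

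First I would apply Brenner's inequality (see S.C.~Brenner, \emph{Korn's inequalities for piecewise $H^1$ vector fields}, Math.\ Comp.\ 2004) to the piecewise polynomial field $\bv_0$ with the Dirichlet condition encoded through $\bv_b|_{\Gamma_D}=\b0$, obtaining
\begin{equation*}
\sumT \|\nabla \bv_0\|_T^2 \;\lesssim\; \sumT \|\eps(\bv_0)\|_T^2 + \sum_{e\in\E_h}h_e^{-1}\|[\![\bv_0]\!]\|_e^2.
\end{equation*}
Here the rigid-body-motion part is controlled because $|\Gamma_D|>0$ kills translations and infinitesimal rotations (a standard argument via a compactness/Petree--Tartar lemma adapted to the broken space, as in Brenner). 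Next, using the telescoping identity above on interior edges and $\bv_b=0$ on $\Gamma_D$, each edge term $h_e^{-1}\|[\![\bv_0]\!]\|_e^2$ is absorbed into $\sum_T h_T^{-1}\|\bv_0-\bv_b\|_{\partial T}^2$ (shape regularity gives $h_e\simeq h_T$ for $e\subset\partial T$). This yields the first inequality.

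The second inequality is then essentially free: the first inequality controls $\sum_T \|\nabla \bv_0\|_T^2$ by $\sum_T\|\eps(\bv_0)\|_T^2 + \sum_T h_T^{-1}\|\bv_0-\bv_b\|_{\partial T}^2$, and these two quantities together are already bounded by $\|\bv_h\|_V^2$ (by definition of the norm, dropping the nonnegative $\lambda$-term). Adding the stabilization term $\sum_T h_T^{-1}\|\bv_0-\bv_b\|_{\partial T}^2$ once more to the left-hand side still leaves it bounded by the same quantity, proving the enhanced bound.

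The main obstacle is the first step: invoking the broken Korn inequality in a way that is truly valid for our WG space and independent of the Lam\'e parameter $\lambda$. The delicate point is handling the rigid-body motions globally so that no $L^2$-norm of $\bv_0$ (which is not present on the right-hand side) is needed; here the hypothesis $|\Gamma_D|>0$ combined with $\bv_b|_{\Gamma_D}=\b0$ is essential, and one must verify that Brenner's argument (originally phrased for DG or nonconforming spaces) applies verbatim after identifying $\|\bv_0-\bv_b\|_{\partial T}$ with an edge-jump penalty as described. The restriction to triangulations is exactly what makes the cited inequality available without further work.
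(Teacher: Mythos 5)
Your proposal is correct and follows essentially the same route as the paper: the paper's proof also invokes the discrete Korn inequality of Brenner (the reference \cite{Korn2004}) for the piecewise polynomial field $\bv_0$, bounds the edge jumps $\sum_{e}h_e^{-1}\|[\![\bv_0]\!]\|_e^2$ by $\sum_T h_T^{-1}\|\bv_0-\bv_b\|_{\partial T}^2$ via the same triangle-inequality/telescoping argument, and then reads off the second bound from the definition of $\|\cdot\|_V$. Your write-up is merely more explicit about the role of $\Gamma_D$ and the rigid-body motions, which the paper leaves implicit.
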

\begin{proof}
	By the discrete Korn's inequality in \cite{Korn2004}, we have
	\begin{align*}
		\sumT||\nabla\bv_0||_T^2\lesssim& \sumT||\eps(\bv_0)||_T^2+\sum\limits_{e\in\E_h^0}h_e^{-1}||[\![\bv_0]\!]||_e^2\\
		\le&\sumT||\eps(\bv_0)||_T^2+\sumT h_T^{-1}||\bv_0-\bv_b||_{\partial T}^2\\
		\le&\|\bv_h\|_V^2,
	\end{align*}
	which completes the proof.
\end{proof}

The following results is based on \cite{Zhai2019b}.
\begin{lemma}\label{36}
	For any $\bv_h\in V_h$, there exists $\bv\in\bh_E^1(\Omega)$ such that
	\begin{align}\label{6}
		\|\bv\|_1\lesssim\|\bv_h\|_V\quad and\quad \|\bv-\bv_h\|_b\lesssim h\|\bv_h\|_V.
	\end{align}
\end{lemma}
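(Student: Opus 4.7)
The plan is to construct $\bv$ as an averaged (Oswald/Karakashian--Pascal-type) enrichment of the interior component $\bv_0$, mapped into the continuous Lagrange subspace $W_h \subset \bh_E^1(\Omega)$ of piecewise $\bP_k$ vector fields. Then the two asserted bounds will follow from standard jump-averaging estimates, once the jump terms are controlled by the WG norm $\|\cdot\|_V$ and Theorem \ref{crucial} is used to handle the broken gradient.

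First I would fix a Lagrange nodal basis of $W_h$ and define an operator $E_h : V_h \to W_h$ by setting the value of $E_h\bv_h$ at each interior Lagrange node $z$ equal to the arithmetic mean of $\bv_0|_T(z)$ over the elements $T$ containing $z$, at each boundary node on $\Gamma_D$ equal to $\b0$, and at each interior-of-element node equal to $\bv_0$ itself. Set $\bv := E_h\bv_h$, so by construction $\bv \in \bh_E^1(\Omega)$. Note that $\|\bv - \bv_h\|_b = \|\bv - \bv_0\|$ since $b_w$ only sees the interior part.

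Next I would reduce the jumps of $\bv_0$ to the WG norm. For an interior edge $e = \partial T^+ \cap \partial T^-$, single-valuedness of $\bv_b$ on $e$ gives
\begin{equation*}
\|[\![\bv_0]\!]\|_e \le \|\bv_0|_{T^+}-\bv_b\|_e + \|\bv_0|_{T^-}-\bv_b\|_e,
\end{equation*}
and for $e\subset\Gamma_D$ the Dirichlet condition $\bv_b|_e=\b0$ yields $\|\bv_0\|_e \le \|\bv_0-\bv_b\|_e$. Summing with the $h_e^{-1}$ weight,
\begin{equation*}
\sum_{e\in\E_h} h_e^{-1}\|[\![\bv_0]\!]\|_e^2 \;\lesssim\; \sumT h_T^{-1}\|\bv_0-\bv_b\|_{\partial T}^2 \;\le\; \|\bv_h\|_V^2 .
\end{equation*}

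Then I would apply the standard element-wise Oswald estimates (as in Karakashian--Pascal; this is the content cited from \cite{Zhai2019b}):
\begin{equation*}
\|E_h\bv_h-\bv_0\|_T^2 \;\lesssim\; h_T\!\!\sum_{e\in\s_T}\|[\![\bv_0]\!]\|_e^2,
\qquad
\|\nabla(E_h\bv_h-\bv_0)\|_T^2 \;\lesssim\; \sum_{e\in\s_T} h_e^{-1}\|[\![\bv_0]\!]\|_e^2,
\end{equation*}
where $\s_T$ is the bounded patch of edges sharing a node with $T$. Summing over $T$, the first estimate combined with the jump bound yields $\|\bv-\bv_0\|^2 \lesssim h^2\|\bv_h\|_V^2$, which is the second claim. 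Summing the second estimate and invoking Theorem \ref{crucial} to bound $\sumT\|\nabla\bv_0\|_T^2$ by $\|\bv_h\|_V^2$ gives $\|\bv\|_1 \lesssim \|\bv_h\|_V$, which is the first claim.

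The main obstacle is the careful specification of the averaging operator so that it produces a genuinely conforming $\bh_E^1$ function (honoring the Dirichlet condition on $\Gamma_D$ and continuity across edges) while still satisfying the Oswald-type approximation estimates with constants depending only on shape regularity; the jump-bound argument and the invocation of Theorem \ref{crucial} are then essentially bookkeeping. A minor side issue is that, as noted for Theorem \ref{crucial}, one must assume $\T_h$ is a triangulation (so that the discrete Korn inequality and the Lagrange construction apply), which is already standing from the previous lemma.
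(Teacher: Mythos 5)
Your proposal is correct and follows essentially the same route as the paper: the paper simply cites Lemma 3.5 of \cite{Zhai2019b} for the existence of a conforming $\bv$ satisfying \eqref{4}--\eqref{5} (which is exactly the Oswald-averaging construction you spell out, with the jumps controlled by $\bv_0-\bv_b$), and then invokes Theorem \ref{crucial} to absorb the broken gradient, just as you do. The only difference is that you unpack the cited construction explicitly rather than quoting it.
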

\begin{proof}
	By Lemma 3.5 in \cite{Zhai2019b}, there exists $\bv\in\bh_E^1(\Omega)$ such that
	\begin{align}
		\|\bv\|_1&\lesssim\left(\sumT\|\nabla\bv_0\|_T^2+\sumT\|\bv_0-\bv_b\|_{\partial T}^2\right)^{\frac12},\label{4}\\
		 \|\bv-\bv_h\|_b&\lesssim h\left(\sumT\|\nabla\bv_0\|_T^2+\sumT\|\bv_0-\bv_b\|_{\partial T}^2\right)^{\frac12}.\label{5}
	\end{align}
	Then, combining \eqref{4}-\eqref{5} and Theorem \ref{crucial} leads to \eqref{6}. The proof is complete.
\end{proof}

In order to deduce the error estimate in $\|\cdot\|_{-V}$, we define the following dual problem
\begin{equation}\label{dual}
	\left\{
	\begin{array}{rcl}
		-\nabla\cdot \sigma(\bw) &=& \bv,\quad \text{in }\Omega,\\
		\bw &=& \b0,\quad \text{on }\Gamma_D,\\
		\sigma(\bw)\bn &=& \b0,\quad \text{on }\Gamma_N,
	\end{array}
	\right.
\end{equation}
where $\bv\in\bh_E^1(\Omega)$.
\begin{theo}\label{1}
	Assume $\bu\in\bh^{k+1}(\Omega)$ is the exact solution of \eqref{def} and $\bu_h$ is the numerical solution of the WG scheme \eqref{WGsheme2}. If the solution of the dual problem \eqref{dual} has $H^3(\Omega)$-regularity and $k\ge2$, the following estimate holds true
	\begin{align*}
		\trb{Q_h\bu-\bu_h}_{-1}\lesssim h^{k+2-3\delta/2}(\|\bu\|_{k+1}+\lambda\|\nabla\cdot\bu\|_k).
	\end{align*}
\end{theo}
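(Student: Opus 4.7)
The plan is Aubin--Nitsche duality combined with Lemma \ref{36}, in order to gain an extra order of $h$ over Theorem \ref{35}.

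First I would fix $\bw_h\in V_h$ and use Lemma \ref{36} to produce $\tilde\bw\in\bh_E^1(\Omega)$ satisfying $\|\tilde\bw\|_1\lesssim\|\bw_h\|_V$ and $\|\bw_{h,0}-\tilde\bw\|\lesssim h\|\bw_h\|_V$. The norm-equivalence established earlier in this section, $\|\bw_h\|_V\lesssim h^{-\delta/2}\trb{\bw_h}$, then supplies a prefactor $h^{-\delta/2}$ in both of these bounds. I then split
\begin{align*}
b_w(\be_h,\bw_h)=(\be_0,\bw_{h,0}-\tilde\bw)+(\be_0,\tilde\bw).
\end{align*}
The first piece is handled directly by Cauchy--Schwarz together with $\|\be_0\|\le\|\be_h\|_b\lesssim h^{k+1-\delta}(\|\bu\|_{k+1}+\lambda\|\nabla\cdot\bu\|_k)$, which is the boundary-value counterpart of the third estimate of Theorem \ref{11}.

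For the second piece I would introduce the dual solution $\bpsi\in\bh^3(\Omega)$ of \eqref{dual} with right-hand side $\tilde\bw$. Testing the primal error identity \eqref{errequ} with $Q_h\bpsi\in V_h$ and, symmetrically, applying Lemma \ref{33} in the dual direction (with $\bu\leftarrow\bpsi$, $\bf\leftarrow\tilde\bw$, $\bv_h\leftarrow\be_h$), the symmetry of $a_w$ yields
\begin{align*}
(\tilde\bw,\be_0)&=\varphi(\bu,Q_h\bpsi)+\xi(\bu,Q_h\bpsi)+s(Q_h\bu,Q_h\bpsi)\\
&\quad-\varphi(\bpsi,\be_h)-\xi(\bpsi,\be_h)-s(Q_h\bpsi,\be_h).
\end{align*}
Each of the six resulting edge-residual terms is bounded by Cauchy--Schwarz together with standard trace and projection inequalities. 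The $\bpsi$-factors exploit the $H^3$-regularity --- here the hypothesis $k\ge2$ is used so that residuals such as $\|Q_0\bpsi-Q_b\bpsi\|_{\partial T}$ and $\|\dQ_h\eps(\bpsi)-\eps(\bpsi)\|_{\partial T}$ are of optimal order $h_T^{5/2}$ and $h_T^{3/2}$ --- and they deliver an extra $h^2$ over the Theorem \ref{35} computation, while the $\bu$-factors contribute the familiar $h^k$. The explicit $\lambda$ in the two $\xi$-terms is absorbed into $\lambda\|\nabla\cdot\bu\|_k$ and $\lambda\|\nabla\cdot\bpsi\|_2$, respectively, the latter being controlled $\lambda$-uniformly by $\|\tilde\bw\|_1$ under the locking-free regularity assumed for \eqref{dual}.

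Summing the six contributions, substituting $\|\bpsi\|_3\lesssim\|\tilde\bw\|_1\lesssim h^{-\delta/2}\trb{\bw_h}$, and combining with the estimate for $(\be_0,\bw_{h,0}-\tilde\bw)$ delivers the claimed rate $h^{k+2-3\delta/2}$; the extension from $\bw_h\in V_h$ to general $\bw_h\in V$ is straightforward because the $\bh_E^1$-component of $\bw_h$ can be used directly in place of $\tilde\bw$. The main obstacle is bookkeeping: every projection residual on $\partial T$ must simultaneously be sharp in $h$ and $\lambda$-uniform, and the factor $h^{-\delta/2}$ is incurred twice (once from the reconstruction $\tilde\bw$ and once from converting the dual-regularity bound to $\trb{\bw_h}$), so one must track that these two losses combine with --- rather than overwhelm --- the $h^{k+2}$ gained from the residual products.
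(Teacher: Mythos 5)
Your proposal is correct and takes essentially the same route as the paper: an Aubin--Nitsche duality argument with the dual datum taken to be the $\bh_E^1$-reconstruction from Lemma \ref{36}, the six-term identity obtained from the symmetry of $a_w$ by testing \eqref{errequ} with the projected dual solution and the dual analogue of Lemma \ref{33} with $\be_h$, and the residual $(\be_0,\bw_{h,0}-\tilde{\bw})$ absorbed via $\|\be_h\|_b$ and the $O(h)$ reconstruction bound. The bookkeeping of the two $h^{-\delta/2}$ losses matches the paper's final step exactly.
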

\begin{proof}
	Denote $e_h=Q_h\bu-\bu_h$. We choose $\bv_h\in V_h$ and $\bv\in\bh_E^1(\Omega)$ such that $\trb\bv_h=1$, $\trb{e_h}_{-1}=b_w(e_h,\bv_h)$, and \eqref{6} holds. It follows from Lemma \ref{33} that
	\begin{align}\label{34}
		a_w(Q_h\bw,\bw_h)=\varphi(\bw,\bw_h)+\xi(\bw,\bw_h)+s(Q_h\bw,\bw_h)+b_w(\bv,\bw_h),\quad\forall \bw_h\in V_h.
	\end{align} 
	
	Taking $\bw_h=Q_h\bw$ in \eqref{errequ} and $\bw_h=e_h$ in \eqref{34}, and subtracting \eqref{34} from \eqref{errequ}, we have
	\begin{align*}
		(e_0,\bv)=\varphi(\bu,Q_h\bw)+\xi(\bu,Q_h\bw)+s(Q_h\bu,Q_h\bw)-\varphi(\bw,e_h)-\xi(\bw,e_h)-s(Q_h\bw,e_h).
	\end{align*}
	
	Since $\bw\in \bh^3(\Omega)$ and $\bu\in\bh^{k+1}(\Omega)$, the following estimates hold
	\begin{align*}
	\varphi(\bu,Q_h\bw)&=2\mu\sum\limits_{T\in\mathcal{T}_h}\la Q_0\bw-Q_b\bw,(\varepsilon(\bu)-\dQ_h(\varepsilon(\bu)))\bn\ra\\
	&\lesssim h^{k+2}\|\bw\|_3\|\bu\|_{k+1},\\
	\xi(\bu,Q_h\bw)&=\lambda\sum\limits_{T\in\mathcal{T}_h}\la (Q_0\bw-Q_b\bw)\cdot\bn,\di\bu-\Q_h(\di\bu)\ra\\
	&\lesssim h^{k+2}\|\bw\|_3(\lambda\|\di\bu\|_k),\\
	s(Q_h\bu,Q_h\bw)&=\sum\limits_{T\in\mathcal{T}_h}h_T^{-1+\delta}\la Q_0\bu-Q_b\bu,Q_0\bw-Q_b\bw\ra\\
	&\lesssim h^{k+2}\|\bw\|_3\|\bu\|_{k+1}.
\end{align*}
Similarly, by Theorem \ref{35}, we have
\begin{align*}
	\varphi(\bw,e_h)+\xi(\bw,e_h)+s(Q_h\bw,e_h)\lesssim h^{k+2-\delta}\|\bv\|_1(\|\bu\|_{k+1}+\lambda\|\di\bu\|_k).
\end{align*}

Thus, from Lemma \ref{36}, we obtain
\begin{align*}
	\trb{e_h}_{-1}&=b_w(e_h,\bv_h)\le (e_0,\bv)+\|e_0\|\|\bv-\bv_h\|\\
	&\lesssim h^{k+2-\delta}\|\bv_h\|_V(\|\bu\|_{k+1}+\lambda\|\di\bu\|_k)\\
	&\lesssim h^{k+2-3\delta/2}(\|\bu\|_{k+1}+\lambda\|\di\bu\|_k),
\end{align*}
which completes the proof.
\end{proof}
\begin{coro}\label{37}
	Under the conditions of Theorem \ref{1}, the following estimate holds true
	\begin{align*}
		\|Q_h\bu-\bu_h\|_{-V}\lesssim h^{k+2-3\delta/2}(\|\bu\|_{k+1}+\lambda\|\nabla\cdot\bu\|_k).
	\end{align*}
\end{coro}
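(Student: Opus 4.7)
The plan is to deduce Corollary \ref{37} as an almost immediate consequence of Theorem \ref{1} by comparing the two dual norms $\|\cdot\|_{-V}$ and $\trb{\cdot}_{-1}$. Both are defined as suprema over the same test space $V=V_h+\bh_E^1(\Omega)$ with identical numerator $b_w(e_h,\bw)$, so the comparison reduces to relating the denominators $\|\bw\|_V$ and $\trb{\bw}$.

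First I would invoke the norm equivalence $\trb{\bw_h}\lesssim\|\bw_h\|_V$ that was recorded right after the definition of $\trb{\cdot}_{-1}$. For the present argument I would also note that the same inequality continues to hold on the $\bh_E^1(\Omega)$ component of $V$: identifying $\bw\in\bh_E^1(\Omega)$ with the pair $\{\bw,\bw|_e\}$, integration by parts in the definitions of the weak operators yields $\nabla_w\bw=\dQ_h(\nabla\bw)$ and $\nabla_w\cdot\bw=\Q_h(\nabla\cdot\bw)$, while the jump term $\bw_0-\bw_b$ vanishes identically, so the $L^2$-stability of the projections gives $\trb{\bw}\lesssim\|\bw\|_V$. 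Combining with the $V_h$ case one obtains $\trb{\bw}\lesssim\|\bw\|_V$ for every $\bw\in V$.

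Once this comparison is available, I would simply write
\begin{align*}
\|e_h\|_{-V}=\sup_{\bw\in V,\,\bw\neq\b0}\frac{b_w(e_h,\bw)}{\|\bw\|_V}\lesssim\sup_{\bw\in V,\,\bw\neq\b0}\frac{b_w(e_h,\bw)}{\trb{\bw}}=\trb{e_h}_{-1},
\end{align*}
with $e_h=Q_h\bu-\bu_h$. Inserting the bound from Theorem \ref{1} then yields exactly $\|e_h\|_{-V}\lesssim h^{k+2-3\delta/2}(\|\bu\|_{k+1}+\lambda\|\nabla\cdot\bu\|_k)$, which is the claim.

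Because all of the analytical heavy lifting (duality with the problem \eqref{dual}, the consistency identity of Lemma \ref{33}, and the $\trb{\cdot}$ energy estimate of Theorem \ref{35}) was already done inside the proof of Theorem \ref{1}, there is no genuine obstacle here. The only point requiring a little care is the extension of the norm comparison from $V_h$ to the sum space $V$, which is a short consequence of the definitions and the $L^2$-stability of $\dQ_h$ and $\Q_h$, rather than a separate estimate. No new projection bounds, interpolation results, or duality arguments are needed beyond those already in the excerpt.
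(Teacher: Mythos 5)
Your argument is correct and is exactly the route the paper intends: Corollary \ref{37} is stated without proof as an immediate consequence of Theorem \ref{1} combined with the comparison $\trb{\bw}\lesssim\|\bw\|_V$, which upon passing to the dual norms (the numerators $b_w(e_h,\bw)$ being identical) gives $\|e_h\|_{-V}\lesssim\trb{e_h}_{-1}$. Your additional care in extending the norm comparison from $V_h$ to the sum space $V=V_h+\bh_E^1(\Omega)$, over which both suprema are actually taken, is a sensible patch of a point the paper glosses over.
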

\begin{lemma}\label{2}
	When $\bu\in\bh^{k+1}(\Omega)$, the following estimate holds true
	\begin{align*}
		\|Q_h\bu-\bu\|_{-V}\lesssim h^{k+2}\|\bu\|_{k+1}.
	\end{align*}
\end{lemma}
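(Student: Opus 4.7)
The plan is to exploit the $L^2$-orthogonality of $Q_0$ against elementwise polynomials of degree $k$ to gain one order of $h$ beyond the $h^{k+1}$ local approximation estimate, and then to control the resulting broken $H^1$-seminorm of the test function by $\|\cdot\|_V$ via a Korn-type argument.

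First, for any $\bw_h\in V$ with $\bw_h\neq\b0$, unwinding the definition of $b_w$ gives
$$b_w(Q_h\bu-\bu,\bw_h)=\sumT(Q_0\bu-\bu,(\bw_h)_0)_T.$$
Because $Q_0\bu-\bu$ is $L^2$-orthogonal to $\bP_k(T)$ on each $T$, I may replace $(\bw_h)_0$ by $(\bw_h)_0-\Pi_T$ for any $\Pi_T\in\bP_k(T)$. Choosing $\Pi_T$ to be the elementwise constant average of $(\bw_h)_0$ and applying Poincar\'e on $T$, I obtain $\|(\bw_h)_0-\Pi_T\|_T\lesssim h_T\|\nabla(\bw_h)_0\|_T$. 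Combined with the standard bound $\|Q_0\bu-\bu\|_T\lesssim h_T^{k+1}\|\bu\|_{k+1,T}$, Cauchy-Schwarz on each element and summation yield
$$|b_w(Q_h\bu-\bu,\bw_h)|\lesssim h^{k+2}\|\bu\|_{k+1}\Bigl(\sumT\|\nabla(\bw_h)_0\|_T^2\Bigr)^{1/2}.$$

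The remaining step, which I expect to be the main technical obstacle, is showing $\bigl(\sumT\|\nabla(\bw_h)_0\|_T^2\bigr)^{1/2}\lesssim\|\bw_h\|_V$ for an arbitrary $\bw_h\in V$, not merely for the $V_h$ case already covered by Theorem \ref{crucial}. I would repeat the argument of that theorem on the sum space: using that $(\bw_h)_b$ is single-valued on any interior edge $e=\partial T^+\cap\partial T^-$, I write the jump of the interior trace as $[\![(\bw_h)_0]\!]=((\bw_h)_0|_{T^+}-(\bw_h)_b)-((\bw_h)_0|_{T^-}-(\bw_h)_b)$ to obtain $\sumE h_e^{-1}\|[\![(\bw_h)_0]\!]\|_e^2\lesssim\sumT h_T^{-1}\|(\bw_h)_0-(\bw_h)_b\|_{\partial T}^2$. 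Then the broken Korn inequality from \cite{Korn2004}, applied to the piecewise $H^1$ function $(\bw_h)_0$ (which satisfies the homogeneous Dirichlet condition on $\Gamma_D$ through $(\bw_h)_b$), bounds $\sumT\|\nabla(\bw_h)_0\|_T^2$ by $\sumT\|\eps((\bw_h)_0)\|_T^2+\sumE h_e^{-1}\|[\![(\bw_h)_0]\!]\|_e^2\lesssim\|\bw_h\|_V^2$.

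Dividing by $\|\bw_h\|_V$ and taking the supremum over $\bw_h\in V$ then delivers $\|Q_h\bu-\bu\|_{-V}\lesssim h^{k+2}\|\bu\|_{k+1}$. The proof is distinctly cleaner than that of Corollary \ref{37} because there is no duality-to-boundary-value-problem step and no stabilization term: the gain of $h^{k+2}$ comes purely from pairing one power of $h$ from Poincar\'e on the test function with the $h^{k+1}$ approximation of $Q_0$.
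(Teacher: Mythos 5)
Your proposal is correct, and it supplies an actual argument where the paper gives none: the paper's proof of Lemma \ref{2} is simply a citation to \cite{Zhai2019b}, and what you have written is precisely the standard reconstruction of that argument. The two ingredients are exactly right: the $L^2$-orthogonality of $Q_0\bu-\bu$ to $\bP_k(T)$ lets you subtract the elementwise mean of the test function, and the Poincar\'e inequality converts that into one extra factor of $h_T$ multiplying $\|\nabla(\bw_h)_0\|_T$, which paired with $\|Q_0\bu-\bu\|_T\lesssim h_T^{k+1}\|\bu\|_{k+1,T}$ gives the $h^{k+2}$ rate. You also correctly spot the one point that the paper's own Theorem \ref{crucial} does not literally cover: the supremum in $\|\cdot\|_{-V}$ runs over the sum space $V=V_h+\bh_E^1(\Omega)$, not over $V_h$, so the bound $\sumT\|\nabla(\bw_h)_0\|_T^2\lesssim\|\bw_h\|_V^2$ must be re-derived for piecewise $H^1$ fields; your jump-splitting through the single-valued $(\bw_h)_b$ does this. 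The only detail worth making explicit is that the broken Korn inequality of \cite{Korn2004} carries an additional seminorm term that eliminates infinitesimal rigid motions; in this setting it is controlled because $(\bw_h)_b=\b0$ on $\Gamma_D$ with $|\Gamma_D|>0$, so $\|(\bw_h)_0\|_{\Gamma_D}$ is absorbed into $\sumT h_T^{-1}\|(\bw_h)_0-(\bw_h)_b\|_{\partial T}^2$ (the paper's own proof of Theorem \ref{crucial} silently omits this term as well). With that caveat noted, your proof is complete and, as you say, requires no duality argument.
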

\begin{proof}
	See \cite{Zhai2019b}.
\end{proof}

Combining Corollary \ref{37} with Lemma \ref{2}, we have the following error estimate result for the boundary value problem \eqref{def}.
\begin{theo}\label{3}
	Under the conditions of Theorem \ref{1}, the following estimate holds true
	\begin{align*}
		\|\bu-\bu_h\|_{-V}\lesssim h^{k+2-3\delta/2}(\|\bu\|_{k+1}+\lambda\|\nabla\cdot\bu\|_k).
	\end{align*}
\end{theo}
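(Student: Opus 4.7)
The plan is to reduce Theorem \ref{3} to a routine triangle-inequality combination of the two estimates supplied immediately above it. Since $\bu - \bu_h = (\bu - Q_h\bu) + (Q_h\bu - \bu_h)$, and the dual norm $\|\cdot\|_{-V}$ is defined through the bilinear form $b_w(\cdot,\cdot)$ acting on the first component (so it extends in the obvious way to elements of $V = V_h + \bh_E^1(\Omega)$), the norm satisfies the triangle inequality on the sum space. Thus the first step is simply
\begin{align*}
\|\bu - \bu_h\|_{-V} \le \|\bu - Q_h\bu\|_{-V} + \|Q_h\bu - \bu_h\|_{-V}.
\end{align*}

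For the projection error $\|\bu - Q_h\bu\|_{-V}$, I would invoke Lemma \ref{2}, which yields the bound $h^{k+2}\|\bu\|_{k+1}$. For the discrete error $\|Q_h\bu - \bu_h\|_{-V}$, I would cite Corollary \ref{37}, which gives $h^{k+2-3\delta/2}(\|\bu\|_{k+1}+\lambda\|\nabla\cdot\bu\|_k)$. Adding the two and using that $\delta>0$ together with the standing assumption that $h$ is sufficiently small (so $h^{k+2} \le h^{k+2-3\delta/2}$) absorbs the projection-error contribution into the dominant term, producing exactly the stated estimate.

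There is essentially no main obstacle here; the theorem is a one-line consequence of Corollary \ref{37} and Lemma \ref{2}. The only subtlety worth noting is that one must justify that $\|\cdot\|_{-V}$ is meaningful for the continuous error $\bu - Q_h\bu$, not just for functions in $V_h$. This is immediate from the definition of $b_w$, which involves only the interior component $\bv_0$: for $\bu \in \bh_E^1(\Omega)$ one interprets $\bu_0 = \bu$ and $b_w(\bu, \bw_h) = (\bu, \bw_0)$. With this convention, the supremum defining $\|\cdot\|_{-V}$ makes sense uniformly on $V_h + \bh_E^1(\Omega)$ and the triangle inequality is valid, closing the argument.
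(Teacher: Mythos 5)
Your argument is correct and is exactly the route the paper takes: the text introduces Theorem \ref{3} with the words ``Combining Corollary \ref{37} with Lemma \ref{2},'' i.e.\ the same triangle-inequality splitting $\bu-\bu_h=(\bu-Q_h\bu)+(Q_h\bu-\bu_h)$ with the $h^{k+2}$ projection term absorbed into $h^{k+2-3\delta/2}$. Your added remark on extending $\|\cdot\|_{-V}$ to the sum space $V=V_h+\bh_E^1(\Omega)$ is a reasonable clarification the paper leaves implicit.
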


From the Babu\u{s}ka-Osborn's theory, the conclusion of Theorem \ref{3} can be extended to the eigenvalue problem, which means we have the following estimate.
\begin{theo}
	Suppose the solution of the dual problem \eqref{dual} has $H^3(\Omega)$-regularity and $k\ge2$, $(\gamma_{j,h},\bu_{j,h})$ is the $j$-th eigenpair of \eqref{WGscheme}. Then there exists an exact eigenfunction $\bu_j$ corresponding to the $j-$th exact eigenvalue of \eqref{eig} such that the following error estimate holds
	\begin{align*}
		\|\bu_j-\bu_{j,h}\|_{-V}\lesssim h^{k+2-3\delta/2}(\|\bu_j\|_{k+1}+\lambda\|\nabla\cdot\bu_j\|_k),
	\end{align*}
	when $\bu_j\in\bh^{k+1}(\Omega)$.
\end{theo}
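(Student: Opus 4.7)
The plan is to deduce the negative-norm eigenfunction estimate from the source-problem estimate of Theorem \ref{3} via the standard Babu\v{s}ka--Osborn spectral approximation framework, exactly as the paragraph preceding the statement suggests. First I would recast both problems in operator form: define $T:\bl^2(\Omega)\to\bh_E^1(\Omega)\subset V$ by $T\bf=\bu$ with $\bu$ solving \eqref{def}, and $T_h:\bl^2(\Omega)\to V_h\subset V$ by $T_h\bf=\bu_h$ with $\bu_h$ solving \eqref{WGsheme2}. Both operators are compact and self-adjoint with respect to the appropriate $L^2$ inner product, and the continuous and discrete eigenvalue problems \eqref{eig} and \eqref{WGscheme} become $\gamma_j T\bu_j=\bu_j$ and $\gamma_{j,h}T_h\bu_{j,h}=\bu_{j,h}$ respectively.

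Next I would invoke the abstract spectral perturbation result. Let $E_j$ denote the $j$-th continuous eigenspace. Provided $T_h\to T$ in the operator norm $\bl^2\to(V,\|\cdot\|_{-V})$ on $E_j$, the Babu\v{s}ka--Osborn theory guarantees that for $h$ sufficiently small one may select $\bu_j\in E_j$ with
\begin{align*}
\|\bu_j-\bu_{j,h}\|_{-V}\lesssim \sup_{\bu\in E_j,\,\|\bu\|=1}\|(T-T_h)\bu\|_{-V}.
\end{align*}
For any normalized $\bu\in E_j$ one has $T\bu=\gamma_j^{-1}\bu$, a scalar multiple of $\bu_j\in\bh^{k+1}(\Omega)$, while $T_h\bu$ is precisely the WG approximation of the source problem \eqref{def} with right-hand side $\bu$. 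Applying Theorem \ref{3} to this source problem then gives
\begin{align*}
\|(T-T_h)\bu\|_{-V}\lesssim h^{k+2-3\delta/2}\bigl(\|\bu_j\|_{k+1}+\lambda\|\nabla\cdot\bu_j\|_k\bigr),
\end{align*}
and taking the supremum over the finite-dimensional eigenspace $E_j$ completes the argument.

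The main obstacle lies in the abstract setup rather than in the analytic estimates themselves: $T$ and $T_h$ have different codomains, and $\|\cdot\|_{-V}$ is defined by duality on the non-conforming sum space $V=V_h+\bh_E^1(\Omega)$, so some care is needed to confirm that the classical Babu\v{s}ka--Osborn perturbation machinery remains applicable in this non-standard functional setting. Concretely, the required operator-norm convergence $\|T-T_h\|_{\bl^2\to(V,\|\cdot\|_{-V})}\to 0$ on $E_j$ must be extracted uniformly from Theorem \ref{3}, and the spectral gap condition isolating the $j$-th discrete eigenvalue cluster has to be propagated to $V_h$; the latter follows from the $\bl^2\to\bl^2$ norm convergence already implicit in the energy-norm estimates of Theorem \ref{11}, via the continuous embedding $V\hookrightarrow\bl^2(\Omega)$.
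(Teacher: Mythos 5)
Your proposal follows essentially the same route as the paper, which justifies this theorem in a single sentence by invoking the Babu\v{s}ka--Osborn theory to transfer the negative-norm source-problem estimate of Theorem \ref{3} to the eigenvalue problem; your operator-theoretic elaboration (defining $T$, $T_h$ and bounding $\|(T-T_h)\bu\|_{-V}$ over the eigenspace) is the standard way to make that one-line argument precise. The technical caveats you raise about the non-conforming sum space are legitimate but are exactly the details the paper itself leaves implicit.
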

	
	\section{A two-grid scheme}In this section, we propose a two-grid WG scheme for the eigenvalue problem, and give the corresponding analysis for the convergence and efficiency of this scheme. Here, we drop the subscript $j$ to denote a certain eigenvalue of problem \eqref{eig}.
\begin{algorithm}Step1:Generate a coarse grid $\T_H$ on the domain $\Omega$ and solve the following eigenvalue problem on the coarse grid $\T_H$:
	
	Find $\gamma_H\in\Real$ and $\bu_H\in V_H$ such that
	\begin{align*}
		a_s(\bu_H,\bv_H)=\gamma_H b_w(\bu_H,\bv_H),\quad\forall\bv_H\in V_H.
	\end{align*}
	Step2: Refine the coarse grid $T_H$ to obtain a finer grid $\T_h$ and solve one single linear problem on the fine grid $\T_h$:
	
	Find $\widetilde{\bu}_h\in V_h$ such that
	\begin{align*}
		a_s(\widetilde{\bu}_h,\bv_h)=\gamma_H b_w(\bu_h,\bv_h),\quad\forall\bv_h\in V_h.
	\end{align*}
	Step3: Calculate the Rayleigh quotient for $\bu_h$
	\begin{align*}
		\widetilde{\gamma}_h=\frac{a_s(\widetilde{\bu}_h,\widetilde{\bu}_h)}{b_w(\widetilde{\bu}_h,\widetilde{\bu}_h)}.
	\end{align*}
	Finally, we obtain the eigenpair approximation $(\widetilde{\gamma}_h,\widetilde{\bu}_h)$.
\end{algorithm}

First, we need the following discrete Poincar\'{e}'s inequality for the WG method, which has been proved in \cite{Wang2014}.
\begin{lemma}
	The discrete Poincar\'{e}'s inequality holds true on $V_h$, i.e.
	\begin{eqnarray*}
		\|\bv_h\|_b\lesssim |||\bv_h|||,\quad\forall\bv_h\in V_h.
	\end{eqnarray*}
\end{lemma}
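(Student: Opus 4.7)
The strategy is to reduce the discrete Poincar\'e inequality on $V_h$ to the classical Poincar\'e--Friedrichs inequality on $\bh_E^1(\Omega)$ via an $H^1$-conforming lifting of $\bv_h$. Given any $\bv_h=\{\bv_0,\bv_b\}\in V_h$, I would first apply Lemma~\ref{36} to produce $\bv\in\bh_E^1(\Omega)$ satisfying $\|\bv\|_1 \lesssim \|\bv_h\|_V$ and $\|\bv-\bv_h\|_b \lesssim h\|\bv_h\|_V$. Since $|\Gamma_D|>0$, the classical Poincar\'e--Friedrichs inequality yields $\|\bv\|\lesssim \|\bv\|_1$, and the triangle inequality then gives $\|\bv_h\|_b \le \|\bv-\bv_h\|_b + \|\bv\| \lesssim \|\bv_h\|_V$.

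The remaining task is to upgrade this bound from $\|\cdot\|_V$ to $\trb{\cdot}$. The naive route through the norm equivalence $\trb{\bv_h}\lesssim \|\bv_h\|_V\lesssim h^{-\delta/2}\trb{\bv_h}$ would give only $\|\bv_h\|_b\lesssim h^{-\delta/2}\trb{\bv_h}$, which carries an undesirable negative power of $h$. To avoid this I would argue directly from the definition of the weak gradient: integration by parts in \eqref{defw1} shows that on each element $T$, $\nabla_w\bv$ equals $\nabla\bv_0$ minus a boundary lifting of $\bv_0-\bv_b$ whose $L^2(T)$ norm is controlled, via standard trace and inverse estimates, by a constant times $h_T^{-1/2}\|\bv_0-\bv_b\|_{\partial T}$. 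A discrete Korn inequality for broken polynomial spaces, of the flavour used in the proof of Theorem~\ref{crucial}, should then deliver
\begin{align*}
\sumT\|\nabla\bv_0\|_T^2 \lesssim \sumT\|\eps_w(\bv)\|_T^2 + \sumT h_T^{-1+\delta}\|\bv_0-\bv_b\|_{\partial T}^2,
\end{align*}
after which a piecewise Poincar\'e--Friedrichs inequality exploiting $\bv_b=\b0$ on $\Gamma_D$ yields $\|\bv_0\|^2\lesssim \trb{\bv_h}^2$ with an $h$-uniform constant.

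The main technical obstacle is the mismatch between the $h_T^{-1}$ weight naturally produced when estimating the boundary lifting via trace and inverse inequalities, and the weaker $h_T^{-1+\delta}$ weight carried by the stabilizer $s(\cdot,\cdot)$. Closing this gap forces one to exploit the observation that $\eps_w(\bv)$ already encodes part of the jump information that a pointwise stabilizer would otherwise have to bear, so that the full $h_T^{-1}$-weighted jump need not appear on the right-hand side; if this sharper accounting proved too delicate, the safe fallback is to invoke the result directly from \cite{Wang2014} as the authors do.
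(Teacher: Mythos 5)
The paper does not actually prove this lemma: it is quoted verbatim from \cite{Wang2014}, where the stabilizer carries the full weight $h_T^{-1}$. Your first paragraph is correct as far as it goes --- Lemma \ref{36} together with the classical Poincar\'e--Friedrichs inequality on $\bh_E^1(\Omega)$ does give $\|\bv_h\|_b\lesssim\|\bv_h\|_V$ --- and you rightly observe that this is not yet the claim, since the paper's own norm comparison only yields $\|\bv_h\|_V\lesssim h^{-\delta/2}\trb{\bv_h}$ and hence $\|\bv_h\|_b\lesssim h^{-\delta/2}\trb{\bv_h}$.

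The gap is in your second paragraph. The inequality you propose,
\begin{align*}
\sumT\|\nabla\bv_0\|_T^2 \lesssim \sumT\|\eps_w(\bv)\|_T^2 + \sumT h_T^{-1+\delta}\|\bv_0-\bv_b\|_{\partial T}^2,
\end{align*}
is a strict strengthening of the discrete Korn inequality underlying Theorem \ref{crucial}, and no argument is offered for why the weaker weight $h_T^{-1+\delta}$ suffices. The obstruction is structural, not bookkeeping: (i) the piecewise Korn inequality of \cite{Korn2004} controls $\sumT\|\nabla\bv_0\|_T^2$ by $\sumT\|\eps(\bv_0)\|_T^2$ plus $h_e^{-1}$-weighted jumps, and that weight is sharp --- a piecewise infinitesimal rotation with $O(h_T)$ jumps makes the left side of order $\omega^2h^2$ while the $h^{-1+\delta}$-weighted jump term is only of order $\omega^2h^{2+\delta}$; (ii) converting between $\eps(\bv_0)$ and $\eps_w(\bv)$ via \eqref{defw1} costs, by trace and inverse inequalities, an additional $h_T^{-1/2}\|\bv_0-\bv_b\|_{\partial T}$ per element, which upon squaring and summing reinstates exactly the $h_T^{-1}$ weight you are trying to avoid. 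Your remark that $\eps_w(\bv)$ ``already encodes part of the jump information'' is precisely the assertion that would need to be proved, and nothing in the sketch does so. As written, the argument delivers only $\|\bv_h\|_b\lesssim h^{-\delta/2}\trb{\bv_h}$; the uniform bound requires either an adaptation of \cite{Wang2014} to the $h_T^{-1+\delta}$ stabilizer or a genuinely new device, so the ``safe fallback'' of citing that reference is not a fallback --- it is the entire content of the paper's proof, and of yours.
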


From Theorem \ref{11}, suppose the eigenfunction $\bu$ is smooth enough and we have the following estimate immediately
\begin{eqnarray*}
	h^{2k}\lesssim \gamma-\gamma_h\lesssim h^{2k-2\delta}.
\end{eqnarray*}

For simplicity, here and hereafter, we assume the concerned eigenvalues are simple. In order to estimate $|\gamma-\widetilde{\gamma}_h|$, we just need to estimate $|\gamma_h-\widetilde{\gamma}_h|$.

\begin{lemma}\label{12}
	Suppose $(\widetilde{\gamma}_h,\widetilde{\bu}_h)$ is calculated by Algorithm 3 and $(\gamma_h,\bu_h)$ satisfies \eqref{WGscheme}. Then the following estimate holds
	\begin{eqnarray*}
		|\gamma_h-\widetilde{\gamma}_h|\lesssim |||\widetilde{\bu}_h-\bu_h|||^2.
	\end{eqnarray*}
\end{lemma}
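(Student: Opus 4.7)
The plan is to derive the classical algebraic identity for the Rayleigh quotient that expresses $\widetilde{\gamma}_h - \gamma_h$ purely in terms of the error $e_h = \widetilde{\bu}_h - \bu_h$, and then use symmetry of $a_w$ and $b_w$ together with the discrete Poincar\'{e} inequality to finish.

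First I would write $\widetilde{\bu}_h = \bu_h + e_h$ and expand the numerator of the Rayleigh quotient by bilinearity and symmetry:
\begin{equation*}
a_w(\widetilde{\bu}_h,\widetilde{\bu}_h) = a_w(\bu_h,\bu_h) + 2a_w(\bu_h,e_h) + a_w(e_h,e_h).
\end{equation*}
Since $\bu_h\in V_h$ solves \eqref{WGscheme} with $\|\bu_h\|_b=1$, the relation $a_w(\bu_h,\bv_h)=\gamma_h b_w(\bu_h,\bv_h)$ can be applied with $\bv_h=\bu_h$ and with $\bv_h=e_h$. Substituting and then completing the square on the $b_w$ side gives
\begin{equation*}
a_w(\widetilde{\bu}_h,\widetilde{\bu}_h) - \gamma_h b_w(\widetilde{\bu}_h,\widetilde{\bu}_h) = a_w(e_h,e_h) - \gamma_h b_w(e_h,e_h),
\end{equation*}
so that after dividing by $b_w(\widetilde{\bu}_h,\widetilde{\bu}_h)$ we obtain
\begin{equation*}
\widetilde{\gamma}_h - \gamma_h = \frac{a_w(e_h,e_h) - \gamma_h\, b_w(e_h,e_h)}{b_w(\widetilde{\bu}_h,\widetilde{\bu}_h)}.
\end{equation*}

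Next I would bound the numerator. By definition $a_w(e_h,e_h) = |||e_h|||^2$. For the second term, the discrete Poincar\'{e} inequality stated just above the lemma gives $\|e_h\|_b \lesssim |||e_h|||$, hence $b_w(e_h,e_h) = \|e_h\|_b^2 \lesssim |||e_h|||^2$. Because $\gamma_h$ is bounded uniformly in $h$ (it converges to the fixed exact eigenvalue $\gamma$ by Theorem \ref{11}), both contributions together give $|a_w(e_h,e_h) - \gamma_h b_w(e_h,e_h)| \lesssim |||e_h|||^2$.

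Finally I would bound the denominator from below by $1$ up to a constant. Since $\|\bu_h\|_b = 1$, the reverse triangle inequality gives $\|\widetilde{\bu}_h\|_b \ge 1 - \|e_h\|_b$, and another application of the discrete Poincar\'{e} inequality shows $\|e_h\|_b \lesssim |||e_h|||$; as $h,H$ are assumed small enough that the two-grid error $|||e_h|||$ is small, we get $b_w(\widetilde{\bu}_h,\widetilde{\bu}_h) \ge \tfrac{1}{2}$, say. Combining this with the numerator estimate yields the desired bound. The only mildly subtle point, and the one I would flag as the main obstacle, is justifying that $b_w(\widetilde{\bu}_h,\widetilde{\bu}_h) \gtrsim 1$ without circular reasoning; this is typically resolved by assuming the two-grid setting is in the asymptotic regime where $|||\widetilde{\bu}_h-\bu_h|||$ is already known to be small, an assumption implicit in working with a single simple eigenvalue and sufficiently small coarse mesh size $H$.
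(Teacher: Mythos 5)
Your proof is correct: the Rayleigh-quotient identity $\widetilde{\gamma}_h-\gamma_h=\bigl(a_w(e_h,e_h)-\gamma_h b_w(e_h,e_h)\bigr)/b_w(\widetilde{\bu}_h,\widetilde{\bu}_h)$ with $e_h=\widetilde{\bu}_h-\bu_h$, combined with the discrete Poincar\'{e} inequality and the uniform boundedness of $\gamma_h$, is exactly the standard argument this lemma rests on; the paper itself states the result without proof. Your flagged concern about bounding $b_w(\widetilde{\bu}_h,\widetilde{\bu}_h)$ away from zero is legitimate but harmless here, since Lemma \ref{14} controls $|||e_h|||$ (hence $\|e_h\|_b$) independently of Lemma \ref{12}, so invoking the asymptotic regime is not circular.
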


\begin{lemma}\label{14}
	Under the conditions of Lemma \ref{12}, the following estimate holds true
	\begin{eqnarray}\label{13}
		|||\widetilde{\bu}_h-\bu_h|||\lesssim H^{2k-2\delta}+H^{k+2-2\delta}, \quad when\quad h<H.
	\end{eqnarray}
\end{lemma}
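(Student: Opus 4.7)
The plan is to take $\bv_h := \widetilde{\bu}_h - \bu_h \in V_h$ as the test function in both the Step~2 equation and the fine-grid eigenvalue equation satisfied by $\bu_h$, then subtract to arrive at the key identity
\begin{equation*}
|||\bv_h|||^2 = a_w(\bv_h,\bv_h) = \gamma_H\,b_w(\bu_H - \bu_h,\bv_h) + (\gamma_H - \gamma_h)\,b_w(\bu_h,\bv_h).
\end{equation*}
It then remains to bound each term on the right-hand side by the advertised quantity times $|||\bv_h|||$.

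The second term is the easier one. By Theorem~\ref{11}, $|\gamma - \gamma_H| \lesssim H^{2k-2\delta}$ and $|\gamma - \gamma_h| \lesssim h^{2k-2\delta} \le H^{2k-2\delta}$ since $h < H$, so $|\gamma_H - \gamma_h|\lesssim H^{2k-2\delta}$. Combined with the normalization $\|\bu_h\|_b = 1$ and the discrete Poincar\'e inequality $\|\bv_h\|_b \lesssim |||\bv_h|||$, this gives the $H^{2k-2\delta}|||\bv_h|||$ contribution.

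For the first term I would insert the exact eigenfunction $\bu$ and split $b_w(\bu_H - \bu_h,\bv_h) = b_w(\bu - \bu_h,\bv_h) - b_w(\bu - \bu_H,\bv_h)$. The fine-grid piece is handled directly by the eigenvalue negative-norm estimate from Section~3 combined with the norm equivalence $\|\bv_h\|_V \lesssim h^{-\delta/2}|||\bv_h|||$:
\begin{equation*}
|b_w(\bu - \bu_h,\bv_h)| \le \|\bu - \bu_h\|_{-V}\,\|\bv_h\|_V \lesssim h^{k+2-3\delta/2}\cdot h^{-\delta/2}|||\bv_h||| = h^{k+2-2\delta}|||\bv_h||| \le H^{k+2-2\delta}|||\bv_h|||.
\end{equation*}
For the coarse-grid piece $b_w(\bu - \bu_H,\bv_h)$, I would invoke Lemma~\ref{36} to produce a conforming $\tilde{\bv} \in \bh_E^1(\Omega)$ with $\|\tilde{\bv}\|_1 \lesssim \|\bv_h\|_V$ and $\|\tilde{\bv} - \bv_h\|_b \lesssim h\|\bv_h\|_V$, and write it as $(\bu - \bu_H,\tilde{\bv}) + (\bu - \bu_H,\bv_h - \tilde{\bv})$. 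The correction piece is bounded by $\|\bu - \bu_H\|_b\,\|\bv_h - \tilde{\bv}\|_b \lesssim H^{k+1-\delta}\cdot h^{1-\delta/2}|||\bv_h|||$, which is of order $H^{k+2-3\delta/2}|||\bv_h|||$ and hence well inside the target $H^{k+2-2\delta}|||\bv_h|||$. The main piece $(\bu - \bu_H,\tilde{\bv})$ is bounded by the coarse-grid negative-norm estimate applied to the now-conforming $\tilde{\bv}$, for which $\|\tilde{\bv}\|_V$ essentially reduces to $\|\tilde{\bv}\|_1$.

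The main obstacle is the cross-grid term $b_w(\bu - \bu_H,\bv_h)$: since $\bu_H$ is a coarse-grid WG function while $\bv_h$ is a fine-grid WG function, the dual-norm estimates of Section~3 cannot be applied directly. Lemma~\ref{36} is the bridge that converts $\bv_h$ into a conforming function so that the coarse-grid negative-norm estimate can be brought to bear; the delicate point is tracking the interplay between the two mesh scales through the $h^{-\delta/2}$ factor in the $\|\cdot\|_V$-$|||\cdot|||$ equivalence in order to recover precisely the exponent $k+2-2\delta$ in terms of $H$.
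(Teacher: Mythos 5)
Your proposal follows essentially the same route as the paper: the same error identity $a_w(\widetilde{\bu}_h-\bu_h,\bv_h)=\gamma_H b_w(\bu_H-\bu_h,\bv_h)+(\gamma_H-\gamma_h)b_w(\bu_h,\bv_h)$ (the paper quotes it from Lemma 4.3 of \cite{Zhai2019b} with $\bu$ inserted), the same treatment of the eigenvalue-difference term via Theorem \ref{11} and the discrete Poincar\'{e} inequality, and the same negative-norm duality for $b_w(\bu-\bu_h,\bv_h)$ and $b_w(\bu-\bu_H,\bv_h)$. The one place you genuinely diverge is the cross-grid term: the paper simply writes $|b_w(\bu-\bu_H,\bv_h)|\le\|\bu-\bu_H\|_{-V}\|\bv_h\|_V$ and invokes the coarse-grid estimate, whereas you route $\bv_h$ through the conforming lift of Lemma \ref{36} first. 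Your concern is legitimate --- the coarse-grid negative-norm estimate is proved against test functions in $V_H+\bh_E^1(\Omega)$, which does not contain $\bv_h$ --- and your detour is precisely the mechanism the paper itself uses inside the proof of Theorem \ref{1}; so you are making explicit a step the paper leaves implicit, at no cost in the final rate. Two small caveats: first, you only treat the high-regularity ($k\ge2$, $H^3$-regular dual problem) branch, while the paper also records the $k=1$/$H^2$-regularity case using $\|\cdot\|_b$ estimates, which is what actually produces the $H^{2k-2\delta}$ term in the stated bound; second, the ``delicate point'' you flag at the end is real and is not resolved in the paper either --- since $h<H$ gives $h^{-\delta/2}\ge H^{-\delta/2}$, the product $H^{k+2-3\delta/2}\,h^{-\delta/2}\trb{\bv_h}$ is not literally bounded by $H^{k+2-2\delta}\trb{\bv_h}$, so both your argument and the paper's land on a slightly worse power of $\delta$ than advertised (harmless since $\delta$ is arbitrarily small, but worth stating honestly).
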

\begin{proof}
	By Lemma 4.3 in \cite{Zhai2019b}, we have
	\begin{align*}
		a_s(\widetilde{\bu}_h-\bu_h,\bv_h)&=\gamma_Hb_w(\bu_H-\bu,\bv_h)+\gamma_Hb_w(\bu-\bu_h.\bv_h)\\
		&\quad+(\gamma_H-\gamma)b_w(\bu_h.\bv_h)+(\gamma-\gamma_h)b_w(\bu_h,\bv_h).
	\end{align*}
	If $k=1$ or the solution of the dual problem \eqref{dual} has the $H^2(\Omega)$-regularity, we have
	\begin{align*}
		a_s(\widetilde{\bu}_h-\bu_h,\bv_h)&\lesssim(\|\bu-\bu_H\|_b+\|\bu-\bu_h\|_b)\|\bv_h\|_b+(|\gamma_H-\gamma|+|\gamma_h-\gamma|)\|\bv_h\|_b\\
		&\lesssim (H^{k+1-\delta}+h^{k+1-\delta})\|\bv_h\|_b+(H^{2k-2\delta}+h^{2k-2\delta})\|\bv_h\|_b\\
		&\lesssim (H^{k+1-\delta}+H^{2k-2\delta})|||\bv_h|||\\
		&\lesssim H^{2k-2\delta}|||\bv_h|||.
	\end{align*}
	
	If $k>1$ and the solution of the dual problem \eqref{dual} has the $H^3(\Omega)$-regularity, we have
	\begin{align*}
		a_s(\widetilde{\bu}_h-\bu_h,\bv_h)&\lesssim(\|\bu-\bu_H\|_{-V}+\|\bu-\bu_h\|_{-V})\|\bv_h\|_V+(|\gamma_H-\gamma|+|\gamma_h-\gamma|)\|\bv_h\|_b\\
		&\lesssim (H^{k+2-3\delta/2}+h^{k+2-3\delta/2})\|\bv_h\|_V+(H^{2k-2\delta}+h^{2k-2\delta})\|\bv_h\|_b\\
		&\lesssim (H^{k+2-2\delta}+H^{2k-2\delta})|||\bv_h|||\\
		&\lesssim H^{k+2-2\delta}|||\bv_h|||.
	\end{align*}
	
	By substituting $\bv_h=\widetilde{\bu}_h-\bu_h$ into the estimates above, we can obtain the desired result \eqref{13} and the proof is completed.
\end{proof}

From Lemma \ref{12} and \ref{14}, the convergence of $\gamma_h-\widetilde{\gamma}_h$ follows immediately.

\begin{lemma}\label{15}
	Suppose $(\widetilde{\gamma}_h,\widetilde{\bu}_h)$ is calculated by Algorithm 3 and $(\gamma_h,\bu_h)$ satisfies \eqref{WGscheme}. Then the following estimate holds
	\begin{eqnarray*}
		|\gamma_h-\widetilde{\gamma}_h|\lesssim H^{4k-4\delta}+H^{2k+4-4\delta},\quad when\quad h<H.
	\end{eqnarray*}
\end{lemma}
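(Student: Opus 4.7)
The plan is to combine Lemma \ref{12} and Lemma \ref{14} directly: Lemma \ref{12} bounds $|\gamma_h-\widetilde{\gamma}_h|$ by the squared triple-norm error $\trb{\widetilde{\bu}_h-\bu_h}^2$, and Lemma \ref{14} gives a sum-of-two-powers bound on $\trb{\widetilde{\bu}_h-\bu_h}$ itself. So the only content beyond invoking those two lemmas is an elementary estimate for the square of a binomial with different exponents.

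Concretely, I would first apply Lemma \ref{12} to write
\begin{equation*}
|\gamma_h-\widetilde{\gamma}_h|\lesssim \trb{\widetilde{\bu}_h-\bu_h}^2,
\end{equation*}
and then substitute the estimate from Lemma \ref{14} to obtain
\begin{equation*}
|\gamma_h-\widetilde{\gamma}_h|\lesssim \bigl(H^{2k-2\delta}+H^{k+2-2\delta}\bigr)^2.
\end{equation*}
Expanding the square produces a cross term $H^{3k+2-4\delta}$, which by Young's inequality $2ab\le a^2+b^2$ is controlled by $H^{4k-4\delta}+H^{2k+4-4\delta}$. Hence
\begin{equation*}
|\gamma_h-\widetilde{\gamma}_h|\lesssim H^{4k-4\delta}+H^{2k+4-4\delta},
\end{equation*}
which is exactly the claimed bound.

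There is essentially no obstacle here: both ingredients are already proved, and the final step is a one-line algebraic manipulation. The only thing worth being careful about is whether $h<H$ is actually used; it enters only implicitly, through the hypothesis of Lemma \ref{14}, to ensure that in the triple-norm estimate the powers of $H$ dominate the corresponding powers of $h$ so that the fine-grid contributions can be absorbed into the coarse-grid terms. Since that has already been handled inside the proof of Lemma \ref{14}, no further work on the mesh-size relation is needed at this stage.
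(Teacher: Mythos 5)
Your proposal is correct and matches the paper exactly: the paper derives this lemma directly from Lemma \ref{12} and Lemma \ref{14} with no further argument, and your squaring of the two-term bound (with the cross term absorbed via $2ab\le a^2+b^2$) is precisely the elementary step left implicit there. Your remark that $h<H$ enters only through the hypothesis of Lemma \ref{14} is also consistent with the paper's treatment.
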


With Lemma \ref{14} and \ref{15}, we arrive at the following convergence theorem.

\begin{theo}\label{16}
	Suppose $(\widetilde{\gamma}_h,\widetilde{\bu}_h)$ is calculated by Algorithm 3, $h<H$ and the exact eigenfunctions of \eqref{eig} have $H^{k+1}(\Omega)$-regularity. Then there exists an exact eigenpair $(\gamma,\bu)$ such that the following estimates hold true
	\begin{align*}
		|||Q_h\bu-\widetilde{\bu}_h|||&\lesssim H^{\bar{k}}+h^{k-\delta/2},\\
		|\gamma-\widetilde{\gamma}_h|&\lesssim H^{2\bar{k}}+h^{2k-2\delta},
	\end{align*}
	where $\bar{k}=\min\{2k-2\delta,k+2-2\delta\}$.
\end{theo}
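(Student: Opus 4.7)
The plan is to prove both estimates by a triangle inequality that splits each two-grid error into the single-grid WG discretization error on the fine mesh plus the two-grid correction, so that every ingredient is already available from the previous sections. Under the hypothesis $h < H$ the $H$-contributions dominate any mixed $H^{a}h^{b}$ terms arising from the corrections, while the pure $h$-contribution from the fine-mesh discretization remains isolated.

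For the eigenfunction bound I would write
\begin{equation*}
|||Q_h\bu - \widetilde{\bu}_h||| \le |||Q_h\bu - \bu_h||| + |||\bu_h - \widetilde{\bu}_h|||.
\end{equation*}
The first term on the right is the WG error for the eigenvalue problem on the fine mesh: combining Theorem \ref{11} with the equivalence $\trb{\bv_h} \lesssim \|\bv_h\|_V \lesssim h^{-\delta/2}\trb{\bv_h}$ from Lemma 3.1, together with the assumed $\bh^{k+1}$-regularity of $\bu$, yields $|||Q_h\bu - \bu_h||| \lesssim h^{k-\delta/2}$. The second term is precisely the quantity controlled by Lemma \ref{14}, which gives $|||\bu_h - \widetilde{\bu}_h||| \lesssim H^{2k-2\delta} + H^{k+2-2\delta} = H^{\bar k}$ by the definition of $\bar k = \min\{2k-2\delta, k+2-2\delta\}$. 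Adding the two contributions produces the desired estimate.

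For the eigenvalue bound I similarly split
\begin{equation*}
|\gamma - \widetilde{\gamma}_h| \le |\gamma - \gamma_h| + |\gamma_h - \widetilde{\gamma}_h|.
\end{equation*}
Theorem \ref{11} immediately delivers $|\gamma - \gamma_h| \lesssim h^{2k-2\delta}$, and Lemma \ref{15} provides $|\gamma_h - \widetilde{\gamma}_h| \lesssim H^{4k-4\delta} + H^{2k+4-4\delta} = H^{2\bar k}$. Summing these two bounds produces $|\gamma - \widetilde{\gamma}_h| \lesssim H^{2\bar k} + h^{2k-2\delta}$, as claimed.

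The real technical obstacle lies upstream rather than in this theorem. The sharp $H^{\bar k}$ exponent in Lemma \ref{14} is what makes the two-grid procedure competitive, and it required the negative-norm error estimate of Corollary \ref{37}, which in turn rested on the delicate $H^3$-regularity-based duality argument used to prove Theorem \ref{1}. Once those ingredients are in place, the present theorem is essentially bookkeeping: one must trace the $H$- and $h$-dependence carefully, use $h < H$ to absorb cross terms, and check that the worst-case exponents collapse to $\bar k$ (respectively $2\bar k$). The only mild subtlety is transferring the boundary-value estimate of Theorem \ref{35} to the eigenvalue setting in order to justify $|||Q_h\bu - \bu_h||| \lesssim h^{k-\delta/2}$; this is handled through the Babu\v{s}ka--Osborn machinery already invoked for the $\|\cdot\|_{-V}$ estimate at the end of Section 3.
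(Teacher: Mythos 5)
Your proposal is correct and follows essentially the same route as the paper, which derives Theorem \ref{16} directly by combining Theorem \ref{11} (fine-mesh single-grid error), Lemma \ref{14} ($|||\widetilde{\bu}_h-\bu_h|||\lesssim H^{\bar k}$), and Lemma \ref{15} ($|\gamma_h-\widetilde{\gamma}_h|\lesssim H^{2\bar k}$) via the triangle inequality. You also correctly flag the one point the paper leaves implicit, namely that the $h^{k-\delta/2}$ rate for $|||Q_h\bu-\bu_h|||$ comes from the $\trb{\cdot}$ estimate of Theorem \ref{35} transferred to the eigenvalue setting by Babu\v{s}ka--Osborn theory rather than from the weaker $\|\cdot\|_V$ bound in Theorem \ref{11}.
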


From Theorem \ref{16} and Lemma \ref{15}, we can get the following lower bound estimate.
\begin{theo}
	Suppose the conditions of Theorem \ref{16} hold. Let $\bar{k}=\min\{2k-2\delta,k+2-2\delta\}$ and $\delta_0$ be a positive number. If $H^{2\bar{k}}\lesssim h^{2k+\delta_0}$, then when $H$ and $h$ are sufficiently small, we have
	\begin{align*}
		\widetilde{\gamma}_h\le\gamma.
	\end{align*}
\end{theo}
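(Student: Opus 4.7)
The strategy is to use the fine-grid WG eigenvalue $\gamma_h$ as a bridge and decompose
\[
\gamma-\widetilde{\gamma}_h \;=\; (\gamma-\gamma_h) + (\gamma_h-\widetilde{\gamma}_h).
\]
The asymptotic lower bound from Theorem~\ref{11} will supply a definite positive estimate $\gamma-\gamma_h\gtrsim h^{2k}$ for the first summand, while Lemma~\ref{15} controls the absolute value of the second summand by $H^{2\bar{k}}$. Under the hypothesis $H^{2\bar{k}}\lesssim h^{2k+\delta_0}$ the positive first term will dominate, yielding $\gamma-\widetilde{\gamma}_h>0$ for sufficiently small $h$.

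Concretely, by Theorem~\ref{11} there exists a constant $C_1>0$, depending on $\|\bu\|_{k+1}>0$ but independent of $h$, $H$, and $\lambda$, such that for $h$ small enough $\gamma-\gamma_h\ge C_1 h^{2k}$. Observing that $2\bar{k}=\min\{4k-4\delta,\,2k+4-4\delta\}$ is precisely the exponent that appears in Lemma~\ref{15}, there is a constant $C_2>0$ with $|\gamma_h-\widetilde{\gamma}_h|\le C_2 H^{2\bar{k}}$. Inserting the hypothesis in the form $H^{2\bar{k}}\le C_3 h^{2k+\delta_0}$ and combining the two bounds,
\[
\gamma-\widetilde{\gamma}_h \;\ge\; C_1 h^{2k} - C_2 C_3\, h^{2k+\delta_0} \;=\; h^{2k}\bigl(C_1 - C_2 C_3\, h^{\delta_0}\bigr).
\]
Since $\delta_0>0$, choosing $h$ small enough that $C_2 C_3\, h^{\delta_0}\le C_1/2$ yields $\gamma-\widetilde{\gamma}_h\ge \tfrac{C_1}{2}\, h^{2k}>0$, hence $\widetilde{\gamma}_h\le\gamma$ (in fact strictly) as required.

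I do not anticipate a substantive obstacle. The essential point is the strict scale separation enforced by the hypothesis: the two-grid correction $\gamma_h\mapsto\widetilde{\gamma}_h$ is of order $H^{2\bar{k}}$, which is forced to be of strictly higher order than the gap $\gamma-\gamma_h\gtrsim h^{2k}$ coming from the intrinsic asymptotic lower-bound property of the WG method, so the correction cannot reverse the inequality. The only bookkeeping to verify is that the constants $C_1,C_2$ are uniform in $\lambda$ and in the mesh sizes; this is already built into Theorem~\ref{11} and Lemma~\ref{15} per the convention stated at the beginning of Section~2.
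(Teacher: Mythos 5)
Your proposal is correct and follows essentially the same route as the paper's own proof: the decomposition $\gamma-\widetilde{\gamma}_h=(\gamma-\gamma_h)+(\gamma_h-\widetilde{\gamma}_h)$, the lower bound $\gamma-\gamma_h\gtrsim h^{2k}$ from Theorem \ref{11}, and the bound $|\gamma_h-\widetilde{\gamma}_h|\lesssim H^{2\bar{k}}\lesssim h^{2k+\delta_0}$ from Lemma \ref{15}, with the $\delta_0$-gap absorbing the correction term for small $h$. Your version merely makes the constants explicit; no substantive difference.
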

\begin{proof}
	From Theorem \ref{11} we have
	\begin{align*}
		\gamma-\gamma_h\gtrsim h^{2k}.
	\end{align*}
	
	According to Lemma \ref{15}, the following estimate holds
	\begin{align*}
	|\gamma_h-\widetilde{\gamma}_h|\lesssim H^{2\bar{k}}\lesssim h^{2k+\delta_0}.
	\end{align*}
	
	Then, when $h$ is sufficiently small, we obtain
	\begin{align*}
		\gamma-\widetilde{\gamma}_h&=\gamma-\gamma_h+\gamma_h-\widetilde{\gamma}_h\ge\gamma-\gamma_h-|\gamma_h-\widetilde{\gamma}_h|\\
		&\gtrsim h^{2k}-h^{2k+\delta_0}\ge 0,
	\end{align*}
	which completes the proof.
\end{proof}
	
	\section{An Enriched Crouzeix-Raviart element scheme}In this section, we consider the following linear elastic eigenvalue problem:
\begin{equation}\label{eig2}
	\left\{
	\begin{array}{rcl}
		-\mu\triangle\bu-(\lambda+\mu)\nabla(\di\bu) &=& \gamma \bu,\quad \text{in }\Omega,\\
		\bu &=& \b0, \quad~~ \text{on }\partial\Omega,\\
		\int_\Omega \bu^2d\Omega &=& 1.
	\end{array}
	\right.
\end{equation}

Let $p=(\lambda+\mu)\di\bu$, we can obtain the following equivalent problem:
\begin{equation}\label{eig3}
	\left\{
	\begin{array}{rcl}
		-\mu\Delta\bu-\nabla p &=& \gamma \bu,\quad \text{in }\Omega,\\
		\nabla\cdot\bu-\frac{1}{\lambda+\mu}p&=&0,~~\quad \text{in }\Omega,\\
		\bu &=& \b0, \quad~~ \text{on }\partial\Omega,\\
		\int_\Omega \bu^2d\Omega &=& 1.
	\end{array}
	\right.
\end{equation}

Denote $L_0^2(\Omega)=\{q\in L^2(\Omega): \int_\Omega qd\Omega=0\}$. Then the weak formulation of \eqref{eig3} can be written to find $\gamma\in\Real$,  $\bu\in\bh_0^1(\Omega)$ and $p\in L_0^2(\Omega)$ such that $(\bu,\bu)=1$ and
\begin{equation}\label{eig-form}
	\left\{
	\begin{array}{rcl}
		a(\bu,\bv)+b(\bv,p)&=&\gamma(\bu,\bv),\quad\forall\bv\in\bh_0^1(\Omega),\\
		b(\bu,q)-d(p,q)&=&0,\quad\quad\quad~~\forall q\in L^2(\Omega),
	\end{array}
	\right.
\end{equation}
where
\begin{align*}
	a(\bu,\bv) &=\mu(\nabla\bu,\nabla\bv),\\
	b(\bv,q) &=(\nabla\cdot\bv,q),\\
	d(p,q)&=\frac{1}{\lambda+\mu}(p,q).
\end{align*}

Consider a regular triangular mesh $\T_h$ that partition $\Omega$ into triangles, $\E_h$ denotes the set of all edges of $\T_h$. Then we define the following two finite element spaces on $\T_h$:
\begin{align*}
	U_h=&\left\{v\in L^2(\Omega):v|_T\in span\{1,x,y,x^2+y^2\},\int_e v|_{T_1}ds=\int_e v|_{T_2}ds\right.\\
	&\quad\left.\text{if } e=T_1\cap T_2,~ and \int_e v|_Tds=0~\text{if }e=T\cap\partial\Omega\right\},
\end{align*}
\begin{align*}
	W_h=&\left\{v\in L^2(\Omega):v|_T\in span\{1\},~\forall T\in\T_h\right\},
\end{align*}
where $U_h$ is the ECR finite element space.

Denote $V_h=U_h^2$, and the corresponding interpolation operator $I_h:\bh_0^1(\Omega)\rightarrow V_h$ is defined by
\begin{eqnarray}
	\int_{e}(\bv-I_h\bv)ds=\b0,\quad\forall e\in\E_h,\label{int1}\\
	\int_{T}(\bv-I_h\bv)d\bx=\b0,\quad\forall T\in\T_h.\label{int2}
\end{eqnarray}

The following interpolation estimate can be found in \cite{Hu2014b,Lin2013a}.
\begin{lemma}
	For any $\bu\in\bh^{1+s}(\Omega)(0<s\le1)$, there has
	\begin{eqnarray}\label{24}
		||\bu-I_h\bu||_0+h||\bu-I_h\bu||_h\lesssim h^{1+s}||\bu||_{1+s}.
	\end{eqnarray}
\end{lemma}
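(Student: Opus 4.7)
The plan is to establish \eqref{24} via the standard Bramble--Hilbert plus scaling recipe, proceeding element-by-element and then summing. First I would check that $I_h|_T$, whose target space is $\mathrm{span}\{1,x,y,x^2+y^2\}$ componentwise, is unisolvent on each triangle $T$: the three edge-average degrees of freedom from \eqref{int1} together with the single volume-average degree of freedom from \eqref{int2} match the local dimension, and unisolvence follows from a short direct computation on the reference triangle. Both sets of degrees of freedom are well-defined on $\bh^{1+s}(\Omega)$ with $s>0$, since the trace theorem gives $\bu|_{\partial T}\in \bh^{1/2+s}(\partial T)\subset \bl^1(\partial T)$, and the volume functional obviously makes sense.

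The next key observation is that $\bP_1(T)$ sits inside $\mathrm{span}\{1,x,y,x^2+y^2\}$ componentwise, so $I_h$ reproduces all vector-valued linear polynomials: any $\mathbf{p}\in \bP_1(T)$ trivially satisfies \eqref{int1}--\eqref{int2} with itself as interpolant, and unisolvence then forces $I_h\mathbf{p}=\mathbf{p}$. Passing to the reference triangle $\hat T$ via the usual affine map $F_T:\hat T\to T$ and writing $\hat I$ for the induced local interpolation operator, I would invoke a Bramble--Hilbert argument of the form
\begin{align*}
\|\hat\bu-\hat I\hat\bu\|_{0,\hat T}+|\hat\bu-\hat I\hat\bu|_{1,\hat T}\lesssim \inf_{\mathbf{p}\in\bP_1(\hat T)}\|\hat\bu-\mathbf{p}\|_{1+s,\hat T}\lesssim |\hat\bu|_{1+s,\hat T},
\end{align*}
where the first inequality relies on the boundedness of $\mathrm{Id}-\hat I$ as an operator $\bh^{1+s}(\hat T)\to \bh^1(\hat T)$ (a consequence of the trace inequality applied to the edge functionals, norm equivalence on the finite-dimensional target, and the reproduction identity $(\mathrm{Id}-\hat I)(\hat\bu-\mathbf{p})=\hat\bu-\hat I\hat\bu$), and the second is the fractional Deny--Lions/Poincar\'{e} inequality for the quotient $\bh^{1+s}(\hat T)/\bP_1(\hat T)$.

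Finally, I would scale back to $T$ using the standard affine identities $|\bv|_{m,T}\sim h_T^{m-1}|\hat\bv|_{m,\hat T}$ for $m\in\{0,1,1+s\}$, producing the local estimate
\begin{align*}
\|\bu-I_h\bu\|_{0,T}+h_T|\bu-I_h\bu|_{1,T}\lesssim h_T^{1+s}|\bu|_{1+s,T},
\end{align*}
then squaring, summing over $T\in\T_h$, and taking square roots to reach \eqref{24}. The main obstacle I anticipate is the scaling behavior of the fractional Slobodeckij seminorm $|\cdot|_{1+s}$ under the affine change of variables: this does not reduce to the integer-order identity and instead requires either a careful direct computation on the double integral defining the intrinsic seminorm, or an appeal to real interpolation between the $\bh^1$ and $\bh^2$ endpoints. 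Apart from this subtlety, the argument is entirely routine and aligns with the references \cite{Hu2014b,Lin2013a} already cited.
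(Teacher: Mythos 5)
Your proposal is correct, and it is in fact more than the paper offers: the paper does not prove this lemma at all, it simply points to \cite{Hu2014b,Lin2013a}, so your Bramble--Hilbert-plus-scaling argument is the standard self-contained route that those references take. The key ingredients are all present and in the right order: unisolvence of the four degrees of freedom (three edge averages from \eqref{int1} plus one volume average from \eqref{int2}) against the four-dimensional local space $\mathrm{span}\{1,x,y,x^2+y^2\}$ per component; well-definedness of the edge functionals on $\bh^{1+s}$ via the trace theorem; reproduction of $\bP_1(T)$; boundedness of $\mathrm{Id}-\hat I$ from $\bh^{1+s}(\hat T)$ to $\bh^1(\hat T)$ combined with the Deny--Lions quotient estimate, whose kernel for the seminorm $|\cdot|_{1+s,\hat T}$ with $0<s\le 1$ is exactly $\bP_1$; and the final summation over $T\in\T_h$. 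You also correctly isolate the only genuinely non-routine point, namely the affine scaling of the Slobodeckij seminorm, which indeed yields $|\hat\bv|_{1+s,\hat T}\lesssim h_T^{s}\,|\det B|^{-1/2}\,h_T\,|\bv|_{1+s,T}\sim h_T^{s}|\bv|_{1+s,T}$ by either the direct double-integral computation or real interpolation between the $\bh^1$ and $\bh^2$ endpoints. The one slip is the scaling identity as written: in two dimensions the correct two-sided relation is $|\hat\bv|_{m,\hat T}\sim h_T^{m-1}|\bv|_{m,T}$, equivalently $|\bv|_{m,T}\sim h_T^{1-m}|\hat\bv|_{m,\hat T}$, whereas you wrote the factor $h_T^{m-1}$ on the wrong side (for $m=0$ your formula would give $\|\bv\|_{0,T}\sim h_T^{-1}\|\hat\bv\|_{0,\hat T}$ instead of $h_T\|\hat\bv\|_{0,\hat T}$). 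Since the local estimate you then state, $\|\bu-I_h\bu\|_{0,T}+h_T|\bu-I_h\bu|_{1,T}\lesssim h_T^{1+s}|\bu|_{1+s,T}$, is the correct one, this is a transcription error rather than a gap, but it should be fixed before the argument is written out in full.
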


Now we are ready to introduce the ECR finite element scheme.
\begin{algorithm}\label{32}
	Find $\gamma_h\in\Real$, $\bu_h\in V_h$ and $q_h\in W_h$ such that $(\bu_h,\bu_h)=1$ and
	\begin{equation}
		\left\{
		\begin{array}{rcl}
			a_h(\bu_h,\bv_h)+b(\bu_h,p_h)&=&\gamma_h(\bu_h,\bv_h),\quad\forall\bv\in V_h,\\
			b_h(\bv_h,q_h)-d_h(p_h,q_h)&=&0,\quad\quad\quad\quad\quad\forall q_h\in W_h,
		\end{array}
		\right.
	\end{equation}
	where
	\begin{align*}
		a_h(\bu,\bv_h) &=\mu\sumT(\nabla\bu_h,\nabla\bv_h)_T,\\
		b_h(\bv,q_h) &=\sumT(\nabla\cdot\bv_h,q_h)_T,\\
		d_h(p_h,q_h)&=\frac{1}{\lambda+\mu}\sumT(p_h,q_h)_T.
	\end{align*}
\end{algorithm}

For each $\bv\in V_h+\bh_0^1(\Omega)$, its norm $||\cdot||_h$ is defined by
\begin{eqnarray}
	||\bv||_h=\sqrt{a_h(\bv_h,\bv_h)},
\end{eqnarray} 
\begin{lemma}
	The following inf-sup condition holds for the space $V_h\times W_h$:
	\begin{align*}
		\sup\limits_{\b0\neq\in V_h}\frac{b_h(\bv_h,q_h)}{\|\bv_h\|_h}\gtrsim \|q_h\|,\quad\forall q_h\in W_h.
	\end{align*}
\end{lemma}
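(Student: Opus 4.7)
The plan is the classical Fortin criterion. First observe that for any $\bv_h\in V_h$ one has $\sum_T\int_T\nabla\cdot\bv_h\,d\bx=\sum_T\int_{\partial T}\bv_h\cdot\bn\,ds=0$, since interior-edge contributions cancel by the edge-average continuity built into $U_h$ and boundary-edge contributions vanish by the boundary condition $\int_e v|_T\,ds=0$. Consequently $b_h(\cdot,q_h)$ depends only on the mean-zero part of $q_h$, so it suffices to produce a stable Fortin operator that matches the divergence on $W_h\cap L_0^2(\Omega)$. Given such a $q_h$, the continuous inf-sup condition (i.e.\ the surjectivity of $\nabla\cdot\colon\bh_0^1(\Omega)\to L_0^2(\Omega)$) delivers $\bv\in\bh_0^1(\Omega)$ with $\nabla\cdot\bv=q_h$ and $\|\bv\|_1\lesssim\|q_h\|$.

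I will take $\bv_h:=I_h\bv\in V_h$. The crucial identity is the commutativity
\[
b_h(I_h\bv,q_h)=\sum_T q_h|_T\int_{\partial T}I_h\bv\cdot\bn\,ds=\sum_T q_h|_T\int_{\partial T}\bv\cdot\bn\,ds=(\nabla\cdot\bv,q_h)=\|q_h\|^2,
\]
where the first equality integrates by parts using that $q_h|_T$ is constant, and the second uses \eqref{int1} edge by edge together with the fact that $\bn_e$ is constant on each straight edge $e$. The remaining ingredient is the $H^1$-stability $\|I_h\bv\|_h\lesssim\|\bv\|_1$. For this I would run a standard reference-triangle argument: on $\hat T$, $I_{\hat T}$ is bounded from $H^1(\hat T)$ into the local shape space and reproduces constants, hence Poincar\'e--Wirtinger gives $\|\nabla I_{\hat T}\hat\bv\|_{\hat T}=\|\nabla I_{\hat T}(\hat\bv-\bar{\hat\bv})\|_{\hat T}\lesssim|\hat\bv|_{1,\hat T}$; affine scaling and summation then yield the global bound. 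Combining,
\[
\sup_{\bw_h\neq\b0}\frac{b_h(\bw_h,q_h)}{\|\bw_h\|_h}\ \ge\ \frac{b_h(I_h\bv,q_h)}{\|I_h\bv\|_h}\ \gtrsim\ \frac{\|q_h\|^2}{\|q_h\|}\ =\ \|q_h\|.
\]

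The main obstacle I anticipate is the commutativity step: one must verify that the ECR edge-integral DOFs \eqref{int1} really do give $\int_{\partial T}I_h\bv\cdot\bn\,ds=\int_{\partial T}\bv\cdot\bn\,ds$ on every $T$, which requires the normal $\bn$ to be constant on each edge (true for triangular meshes) and the compatibility of the ECR boundary condition with $\bv\in\bh_0^1(\Omega)$. A secondary technical point is the $H^1$-stability of $I_h$, which relies on unisolvence of the enriched local space $\{1,x,y,x^2+y^2\}$ against the DOFs \eqref{int1}--\eqref{int2}; this is where the enrichment $x^2+y^2$ earns its name by providing the extra degree of freedom that pins down the element-average DOF independently of the edge-average DOFs.
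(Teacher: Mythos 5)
The paper does not actually prove this lemma; its ``proof'' is the single line ``See \cite{Liu2018,Lin2010,Crouzeix1973}.'' Your Fortin-operator argument is precisely the standard proof contained in those references, and it is essentially correct: the surjectivity of $\nabla\cdot:\bh_0^1(\Omega)\to L_0^2(\Omega)$ with the bound $\|\bv\|_1\lesssim\|q_h\|$, the commutativity $b_h(I_h\bv,q_h)=(\nabla\cdot\bv,q_h)$ for piecewise-constant $q_h$ (which holds exactly because \eqref{int1} preserves edge averages and the normal is constant on each straight edge of a triangle), and the $H^1$-stability of $I_h$ together yield an inf-sup constant independent of $h$. Your opening observation is in fact a correction to the statement as printed: since $\sum_{T\in\T_h}\int_T\nabla\cdot\bv_h\,d\bx=0$ for every $\bv_h\in V_h$ (interior edge contributions cancel by edge-average continuity, boundary ones vanish), the displayed inequality is false for a nonzero constant $q_h$, and the lemma must be read on the mean-zero subspace $W_h\cap L_0^2(\Omega)$ --- equivalently with $\|q_h\|$ replaced by the distance of $q_h$ to constants. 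You restrict to $L_0^2$ and proceed correctly.

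The one step of your sketch that needs more care than you give it is the stability $\|I_h\bv\|_h\lesssim\|\bv\|_1$. The ECR local space $\mathrm{span}\{1,x,y,x^2+y^2\}$ is not affine-invariant: the pullback of $x^2+y^2$ under the affine map $F_T:\hat T\to T$ is a general quadratic, not an element of the reference shape space, so the literal ``transform to $\hat T$, apply Poincar\'e--Wirtinger, scale back'' argument does not apply verbatim. The fix is routine --- argue directly on the physical element using shape regularity and the fact that $I_T$ reproduces constants, or note that stability follows from the approximation bound \eqref{24} combined with an inverse estimate --- but it should be stated, since it is exactly the point at which ECR differs from the plain Crouzeix--Raviart element. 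With that caveat, your proof supplies a complete argument where the paper supplies none.
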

\begin{proof}
	See \cite{Liu2018,Lin2010,Crouzeix1973}.
\end{proof}

The following error estimates are based on the results of \cite{MR1115240,Boffi2000,Lin2012,Osborn1976}.
\begin{lemma}\label{22}
	Suppose the exact eigenpair $(\gamma,\bu,p)$ of \eqref{eig3} satisfies $\bu\in\bh^{1+s}(\Omega)$ and $p\in H^s(\Omega)(0<s\le1)$, then for the eigenpair $(\gamma_h,\bu_h,p_h)$ obtained by \eqref{eig-form}, there exists an exact eigenpair $(\gamma,\bu,p)$ such that
	\begin{align}
		\|\bu-\bu_h\|_h+\|p-p_h\|&\lesssim h^s(\|\bu\|_{1+s}+\|p\|_s),\\
		\|\bu-\bu_h\|&\lesssim h^s\|\bu-\bu_h\|_h\lesssim h^{2s}(\|\bu\|_{1+s}+\|p\|_s).\label{25}
	\end{align}	
\end{lemma}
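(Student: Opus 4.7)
The plan is to reduce the eigenvalue error estimate to a source problem estimate via the Babu\v{s}ka--Osborn spectral framework, in the mixed-form variant of Boffi--Brezzi--Gastaldi. Concretely, define the solution operator $T:\bl^2(\Omega)\to \bh_0^1(\Omega)\times L_0^2(\Omega)$ associated with \eqref{eig-form} (with right-hand side $\bf$ replacing $\gamma\bu$), and its discrete counterpart $T_h$ on $V_h\times W_h$. Once I prove $\|(T-T_h)\bf\|_h+\|\text{(pressure part)}\|\lesssim h^s\|\bf\|$ uniformly for $\bf$ in a bounded set of $\bl^2$, the abstract theory converts this directly into the eigenfunction estimate in the energy norm and gives eigenvalue error of order $h^{2s}$; the simplicity assumption lets one pick a particular exact eigenpair on the right-hand side.

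For the source-problem estimate I would apply the nonconforming mixed-method analogue of Strang's second lemma. Split the error as interpolation error plus a Galerkin-type term, controlled by the inf-sup condition already established plus the discrete coercivity of $a_h(\cdot,\cdot)$ on the discrete kernel (ensured by the enrichment bubble $x^2+y^2$ and the discrete Poincar\'e inequality for ECR). The interpolation contribution is bounded via \eqref{24} and the standard piecewise-constant $L^2$-projection estimate for $p$, yielding the $h^s(\|\bu\|_{1+s}+\|p\|_s)$ factor. The consistency error
\[
E_h(\bu,p;\bv_h)=\sum_{T\in\T_h}\int_{\partial T}\bigl(\mu\partial_{\bn}\bu-p\bn\bigr)\cdot\bv_h\,ds
\]
is handled by the usual ECR trick: since $\int_e[\![\bv_h]\!]\,ds=0$ on every edge, one may subtract edgewise means of $\mu\partial_{\bn}\bu-p\bn$ and apply the trace/Bramble--Hilbert argument to obtain $|E_h(\bu,p;\bv_h)|\lesssim h^s(\|\bu\|_{1+s}+\|p\|_s)\|\bv_h\|_h$.

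For the $L^2$ estimate $\|\bu-\bu_h\|\lesssim h^s\|\bu-\bu_h\|_h$ I would run an Aubin--Nitsche duality argument. Let $(\bw,r)$ solve the mixed elasticity dual problem with datum $\bu-\bu_h$ and pick up $H^{1+s}\times H^s$ regularity. Testing the error equation against $I_h\bw$ and the $L^2$-projection of $r$ into $W_h$, and using the energy-norm bound plus a second consistency-error estimate of the same flavour as above, produces one extra factor of $h^s$. Combining then gives the advertised $h^{2s}$ bound in $L^2$.

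The main obstacle is the consistency term and its uniformity in $\lambda$. One has to verify that no constant blowing up as $\lambda\to\infty$ sneaks in through $p=(\lambda+\mu)\di\bu$ or through $d_h(\cdot,\cdot)$; this is precisely why the mixed reformulation \eqref{eig3} with the penalty $d_h$ is used rather than the primal one, and why the ECR--$P_0$ pair (which is Stokes-stable) is essential. A secondary subtlety is that Algorithm \ref{32} is posed with $W_h$ of piecewise constants rather than $W_h\cap L_0^2(\Omega)$, so I would argue that the mean-value component of $p_h$ is fixed by testing the second equation with the constant $1$, thereby identifying the discrete solution with the one covered by the inf-sup lemma on $V_h\times(W_h\cap L_0^2)$.
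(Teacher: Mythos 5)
The paper gives no proof of this lemma at all---it simply defers to the cited references \cite{MR1115240,Boffi2000,Lin2012,Osborn1976}---and your proposal is a correct reconstruction of exactly the argument those references contain: Babu\v{s}ka--Osborn reduction to the source problem, Strang's second lemma with the ECR edge-mean consistency trick, and Aubin--Nitsche duality for the $L^2$ bound, with the $\lambda$-uniformity supplied by the inf-sup-stable mixed reformulation. This is essentially the same approach the paper relies on, so no further comparison is needed.
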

\begin{theo}\label{23}
	Let $(\omega,\bu,p)$ be the eigenpair of \eqref{eig3}, and $(\omega_h,\bu_h,p_h)$ be eigenpair of \eqref{eig-form}, then we have the following expansion
	\begin{eqnarray}\label{17}
		\quad\quad\quad\gamma-\gamma_h=\|\bu-\bu_h\|_h^2+\|p-p_h\|^2-\gamma_h\|I_h\bu-\bu_h\|^2+\gamma_h(\|I_h\bu\|^2-\|\bu\|^2).
	\end{eqnarray}
\end{theo}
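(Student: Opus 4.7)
The plan is to treat the expansion as a Rayleigh-quotient-type identity and reduce everything to one ECR-specific orthogonality. The starting point is to characterize both eigenvalues. Testing the continuous system \eqref{eig-form} with $(\bv,q)=(\bu,p)$, the second equation yields $b(\bu,p)=d(p,p)$, and substituting into the first gives $\gamma = a(\bu,\bu)+d(p,p)$ (using $(\bu,\bu)=1$). The identical manipulation on Algorithm \ref{32} yields $\gamma_h = a_h(\bu_h,\bu_h)+d_h(p_h,p_h)$. Subtracting and applying polarization to $a_h$ and $d$ gives
\begin{align*}
\gamma-\gamma_h = \|\bu-\bu_h\|_h^2+\|p-p_h\|^2+2\bigl[a_h(\bu-\bu_h,\bu_h)+d(p-p_h,p_h)\bigr],
\end{align*}
so the entire problem reduces to identifying the bracketed cross term with $\gamma_h(I_h\bu-\bu_h,\bu_h)$.

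For this, I would first use the interpolation identities \eqref{int1}--\eqref{int2}. Since $W_h$ consists of element-wise constants and $I_h\bu$ preserves both edge averages and element averages of $\bu$, a direct integration by parts on each triangle gives $b_h(I_h\bu,q_h)=b_h(\bu,q_h)=b(\bu,q_h)$ for every $q_h\in W_h$. The continuous relation $b(\bu,p_h)=d(p,p_h)$ then yields $b_h(I_h\bu,p_h)=d(p,p_h)$. Inserting $\bv_h=I_h\bu$ in the first discrete equation produces $a_h(I_h\bu,\bu_h)+d(p,p_h)=\gamma_h(I_h\bu,\bu_h)$, and combining with $a_h(\bu_h,\bu_h)=\gamma_h-d(p_h,p_h)$ together with $(\bu_h,\bu_h)=1$, a short rearrangement gives
\begin{align*}
a_h(\bu-\bu_h,\bu_h)+d(p-p_h,p_h) = a_h(\bu-I_h\bu,\bu_h)+\gamma_h(I_h\bu-\bu_h,\bu_h).
\end{align*}

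The hard part, which I expect to be the heart of the proof, is the orthogonality $a_h(\bu-I_h\bu,\bu_h)=0$. Integrating by parts element-by-element gives
\begin{align*}
a_h(\bu-I_h\bu,\bu_h) = -\mu\sumT(\bu-I_h\bu,\Delta\bu_h)_T + \mu\sumT\langle\bu-I_h\bu,\nabla\bu_h\cdot\bn\rangle_{\partial T}.
\end{align*}
Since $\bu_h|_T\in[\mathrm{span}\{1,x,y,x^2+y^2\}]^2$, $\Delta\bu_h$ is a constant on each $T$, so the volume integrals vanish by \eqref{int2}. The decisive geometric observation is that $\nabla\bu_h\cdot\bn$ is \emph{constant} along every edge $e$: the only possibly non-constant contribution comes from $\nabla(x^2+y^2)\cdot\bn=2\bx\cdot\bn$, but the edge tangent is orthogonal to $\bn$, so $\bx\cdot\bn$ is constant along $e$. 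Consequently the factor $\nabla\bu_h\cdot\bn|_e$ pulls outside the edge integral and the boundary term vanishes by the edge-average condition \eqref{int1}.

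It remains to rewrite $2\gamma_h(I_h\bu-\bu_h,\bu_h)$ via the polarization identity $2(I_h\bu-\bu_h,\bu_h)=\|I_h\bu\|^2-\|I_h\bu-\bu_h\|^2-\|\bu_h\|^2$ and to replace $\|\bu_h\|^2$ by $\|\bu\|^2$ using the normalizations $\|\bu_h\|^2=\|\bu\|^2=1$; this converts the cross term into $-\gamma_h\|I_h\bu-\bu_h\|^2+\gamma_h(\|I_h\bu\|^2-\|\bu\|^2)$. Substituting back delivers the announced expansion \eqref{17}.
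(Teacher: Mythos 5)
Your proof is correct and follows essentially the same route as the paper: both start from the Rayleigh-quotient identities $\gamma=a(\bu,\bu)+d(p,p)$ and $\gamma_h=a_h(\bu_h,\bu_h)+d_h(p_h,p_h)$, polarize, test the discrete scheme with $\bv_h=I_h\bu$, use the continuous second equation to convert $b(\bu,p_h)$ into $d(p,p_h)$, and kill the remaining consistency terms via the interpolation conditions \eqref{int1}--\eqref{int2}. The only difference is organizational, plus the fact that you spell out the orthogonality $a_h(\bu-I_h\bu,\bu_h)=0$ (constant $\Delta\bu_h$ per element, constant $\nabla\bu_h\cdot\bn$ per edge) that the paper leaves as ``easy to check.''
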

\begin{proof}
	By $b(\bu,\bu)=1$ and $b_h(\bu_h,\bu_h)=1$, we have
	\begin{align*}
		\|\bu\|_h^2+\|p\|^2=\gamma,\quad\|\bu_h\|_h^2+\|p_h\|^2=\gamma_h.
	\end{align*}
	Then for each $\bv_h\in V_h$, we have
	\begin{align}\label{18}
		\|\bu-\bu_h\|_h^2&=\|\bu\|_h^2+\|\bu_h\|_h^2-2a_h(\bu,\bu_h)\\
		&=\gamma+\gamma_h-(\|p\|^2+\|p_h\|^2)-2a_h(\bu-\bv_h,\bu_h)-2a_h(\bv_h,\bu_h).\notag
	\end{align}
	By the first equation of \eqref{eig-form}, we obtain
	\begin{align}\label{19}
		&\quad-2a_h(\bv_h,\bu_h)-2b_h(\bv_h,p_h)=-2\gamma_h(\bv_h,\bu_h)\notag\\
		&=\gamma_h\|\bv_h-\bu_h\|^2-\gamma_h\|\bv_h\|^2-\gamma_h\|\bu_h\|^2\\
		&=\gamma_h\|\bv_h-\bu_h\|^2-\gamma_h(\|\bv_h\|^2-\|\bu_h\|^2)-2\gamma_h\notag,
	\end{align}
	and the second equation of \eqref{eig-form} implies 
	\begin{align}\label{20}
		b_h(\bu,p_h)=(p,p_h).
	\end{align}
	Combining \eqref{18}-\eqref{20}, we derive
	\begin{align}\label{21}
		\|\bu-\bu_h\|_h^2-2b_h(\bv_h-\bu,p_h)&=\gamma-\gamma_h-\|p-p_h\|^2-2a_h(\bu-\bv_h,\bu_h)\\
		&\quad+\gamma_h\|\bv_h-\bu_h\|^2-\gamma_h(\|\bv_h\|^2-\|\bu_h\|^2)\notag
	\end{align}
	Substituting $\bv_h=I_h\bu$ into \eqref{21} we have
	\begin{align*}
		\gamma-\gamma_h&=\|\bu-\bu_h\|_h^2+\|p-p_h\|^2-\gamma_h\|I_h\bu-\bu_h\|^2\\
		&\quad+\gamma_h(\|I_h\bu\|^2-\|\bu\|^2)+2a_h(\bu-I_h\bu,\bu_h)-2b_h(I_h\bu-\bu,p_h).
	\end{align*}
	By \eqref{int1}-\eqref{int2} and Green formulation, it is easy to check
	\begin{align*}
		a_h(\bu-I_h\bu,\bu_h)=0,\quad b_h(I_h\bu-\bu,p_h)=0,
	\end{align*}
	which completes the proof.
\end{proof}

Next, we are ready to proof the lower bound property of \eqref{eig-form}.
\begin{theo}
	Suppose the conditions of Lemma \ref{22} hold, then when h is small enough, we have
	\begin{eqnarray}\label{30}
		\gamma-\gamma_h\ge 0.
	\end{eqnarray}
\end{theo}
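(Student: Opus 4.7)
The plan is to start from the expansion \eqref{17} in Theorem \ref{23},
\[
	\gamma-\gamma_h = \|\bu-\bu_h\|_h^2 + \|p-p_h\|^2 - \gamma_h\|I_h\bu-\bu_h\|^2 + \gamma_h(\|I_h\bu\|^2 - \|\bu\|^2),
\]
and to show that each of the two correction terms on the right is of strictly higher order in $h$ than the leading positive contribution $\|\bu-\bu_h\|_h^2 + \|p-p_h\|^2$. Once this is established, the asymptotic order $h^{2s}$ of the Galerkin error from Lemma \ref{22} will ensure that the right-hand side is non-negative for sufficiently small $h$.

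For the first correction term I would apply the triangle inequality $\|I_h\bu-\bu_h\|\le\|I_h\bu-\bu\|+\|\bu-\bu_h\|$, then combine the interpolation estimate \eqref{24} with the intermediate $L^2$ bound $\|\bu-\bu_h\|\lesssim h^s\|\bu-\bu_h\|_h$ from \eqref{25} to obtain
\[
	\|I_h\bu-\bu_h\|^2 \lesssim h^{2(1+s)} + h^{2s}\|\bu-\bu_h\|_h^2,
\]
so that, after multiplying by $\gamma_h$, the contribution $h^{2s}\|\bu-\bu_h\|_h^2$ can be absorbed by a small fraction of $\|\bu-\bu_h\|_h^2$ once $h$ is small. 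For the second correction term I would expand
\[
	\|I_h\bu\|^2-\|\bu\|^2 = \|I_h\bu-\bu\|^2 + 2(I_h\bu-\bu,\bu),
\]
and exploit the defining integral property \eqref{int2}: since $\int_T(I_h\bu-\bu)d\bx=\b0$ on each element, $(I_h\bu-\bu,\bw)=0$ for every piecewise constant vector field $\bw$. Choosing $\bw$ to be the piecewise constant $L^2$-projection $\Pi_0\bu$ and using the standard estimate $\|\bu-\Pi_0\bu\|\lesssim h\|\bu\|_1$ together with \eqref{24} gives
\[
	|(I_h\bu-\bu,\bu)| = |(I_h\bu-\bu,\bu-\Pi_0\bu)| \lesssim h^{1+s}\cdot h \lesssim h^{2+s},
\]
and hence $|\|I_h\bu\|^2-\|\bu\|^2| \lesssim h^{2+s}$.

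Assembling these estimates into \eqref{17} yields
\[
	\gamma-\gamma_h \ge (1-Ch^{2s})\|\bu-\bu_h\|_h^2 + \|p-p_h\|^2 - Ch^{2+s} - Ch^{2(1+s)},
\]
so that for $h$ sufficiently small the prefactor exceeds $\tfrac12$ and the inequality reduces to $\tfrac12\|\bu-\bu_h\|_h^2 + \|p-p_h\|^2$ competing against $O(h^{2+s})$; since $2s<2+s$, the positive term wins asymptotically and \eqref{30} follows. The main obstacle I expect is precisely this final comparison: Lemma \ref{22} provides only an upper bound on $\|\bu-\bu_h\|_h^2 + \|p-p_h\|^2$, so one needs a saturation-type lower bound $\|\bu-\bu_h\|_h^2+\|p-p_h\|^2\gtrsim h^{2s}$. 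For a simple eigenvalue whose smooth eigenfunction does not lie in the discrete space this is standard (it follows from the non-triviality of $\bu - I_h\bu$ and asymptotic sharpness of the $h^s$ convergence rate), but it is the delicate point behind the ``$h$ small enough'' clause in the statement.
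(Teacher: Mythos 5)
Your argument is correct and is essentially the paper's proof: both start from the expansion \eqref{17}, bound $\|I_h\bu-\bu_h\|^2\lesssim h^{2(1+s)}+h^{2s}\|\bu-\bu_h\|_h^2$ via the triangle inequality with \eqref{24}--\eqref{25}, show $\bigl|\|I_h\bu\|^2-\|\bu\|^2\bigr|\lesssim h^{2+s}$ (you derive this directly from the moment conditions \eqref{int1}--\eqref{int2}, while the paper cites Theorem 2.4 of \cite{Luo2012}), and then let a saturation lower bound on $\|\bu-\bu_h\|_h^2$ dominate the correction terms. The one point you flag as unresolved---the saturation lower bound---is exactly where the paper invokes Theorem 2.1 of \cite{Lin2011a} to obtain $\|\bu-\bu_h\|_h^2\gtrsim h^2$, which suffices since $h^2$ dominates $h^{2+s}$, $h^{2(1+s)}$ and the absorbed term $h^{2s}\|\bu-\bu_h\|_h^2$ for $h$ small.
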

\begin{proof}
	It follow from Theorem \ref{23} that
	\begin{align}\label{26}
		\gamma-\gamma_h\ge\|\bu-\bu_h\|_h^2-\gamma_h\|I_h\bu-\bu_h\|^2+\gamma_h(\|I_h\bu\|^2-\|\bu\|^2).
	\end{align}
	For the first term, by Theorem 2.1 in \cite{Lin2011a}, the following estimate holds:
	\begin{align}\label{27}
		\|\bu-\bu_h\|_h^2\gtrsim h^2.
	\end{align}
	For the second term, from \eqref{24} and \eqref{25}, we have
	\begin{align}\label{28}
		\|I_h\bu-\bu_h\|^2\le \|I_h\bu-\bu\|^2+\|\bu-\bu_h\|^2\lesssim h^{2(1+s)}+h^{2s}\|\bu-\bu_h\|_h^2.
	\end{align}
	For the third term, by Theorem 2.4 in \cite{Luo2012}, we obtain
	\begin{align}\label{29}
		\left|\|I_h\bu\|^2-\|\bu\|^2\right.|\lesssim h^{2+s}.
	\end{align}
	Substistuting \eqref{27}-\eqref{29} into \eqref{26}, we arrive at \eqref{30}. The proof is complete.
\end{proof}
	
	\section{Numerical experiments}In this section, we present some numerical examples of Algorithms 3 and 4 to check the efficiencies and lower bound properties of Algorithms 3 and 4 for the eigenvalue problem \eqref{eig}. In the following examples, uniform mesh is applied, $H$ and $h$ denote mesh sizes. We set the Young's modulus $E$=1, $\delta=0.1$, and report the first five discrete eigenfrequencies $\omega_h=\sqrt{\gamma_h}$. Since the exact eigenvalues are unknown, we compute the convergence rate by the following estimate
\begin{eqnarray*}
	Order\approx\lg\left(\frac{\gamma_h-\gamma_{\frac{h}{2}}}{\gamma_{\frac{h}{2}}-\gamma_{\frac{h}{4}}}\right)/\lg2.
\end{eqnarray*}

\begin{example}\label{e1}
	Consider the linear elastic eigenvalue problem \eqref{eig} on unit square domain $\Omega=(0,1)^2$ with $\Gamma_N=\phi$. We set the Poisson ratio $\nu$=0.49, 0.4999, 0.499999, and solve it by Algorithms \ref{31} and \ref{32}. The corresponding results are presented in table \ref{t1}-\ref{t3}.
\end{example}
\begin{table}[H]
	\begin{center}
		\caption{$WG$ method, k=1}\label{t1}
		\begin{tabular}{|c|c|c|c|c|}\hline
			$H$&1/8&1/16&1/32&Order\\ \hline
			$h$&1/16&1/64&1/256&\\ \hline\hline
			\multicolumn{5}{|c|}{$\nu=0.49$}\\ \hline
			$\omega_{1,h}$&4.126189 & 4.183792 & 4.188228 & \textbf{3.70} \\ \hline
			$\omega_{2,h}$&5.335989 & 5.503344 & 5.516541 & \textbf{3.66} \\ \hline
			$\omega_{3,h}$&5.344595 & 5.504092 & 5.516599 & \textbf{3.67} \\ \hline
			$\omega_{4,h}$&6.317241 & 6.525546 & 6.542092 & \textbf{3.65} \\ \hline
			$\omega_{5,h}$&6.733102 & 7.105710 & 7.135246 & \textbf{3.66} \\ \hline
			\multicolumn{5}{|c|}{$\nu=0.4999$}\\ \hline
			$\omega_{1,h}$&4.114974 & 4.172341 & 4.176771 & \textbf{3.69} \\ \hline
			$\omega_{2,h}$&5.358253 & 5.527023 & 5.540415 & \textbf{3.66} \\
		    \hline
			$\omega_{3,h}$&5.367297 & 5.527753 & 5.540472 & \textbf{3.66} \\
		    \hline
			$\omega_{4,h}$&6.311732 & 6.519471 & 6.536049 & \textbf{3.65} \\ \hline
			$\omega_{5,h}$&6.759380 & 7.135193 & 7.165288 & \textbf{3.64} \\ \hline
		    \multicolumn{5}{|c|}{$\nu=0.499999$}\\ \hline
			$\omega_{1,h}$&4.114964 & 4.172331 & 4.176761 & \textbf{3.69} \\ \hline
			$\omega_{2,h}$&5.358269 & 5.527039 & 5.540432 & \textbf{3.66} \\
			\hline
			$\omega_{3,h}$&5.367312 & 5.527769 & 5.540489 & \textbf{3.66} \\
			\hline
			$\omega_{4,h}$&6.311726 & 6.519465 & 6.536044 & \textbf{3.65} \\ \hline
			$\omega_{5,h}$&6.759400 & 7.135215 & 7.165311 & \textbf{3.64} \\ \hline			
		\end{tabular}
	\end{center}
\end{table}

\begin{table}[H]
	\begin{center}
		\caption{$WG$ method, k=2}\label{t2}
		\begin{tabular}{|c|c|c|c|c|}\hline
			$H$&1/8&1/16&1/32&Order\\ \hline
			$h$&1/16&1/64&1/256&\\ \hline\hline
			\multicolumn{5}{|c|}{$\nu=0.49$}\\ \hline
			$\omega_{1,h}$&4.188132 & 4.188575 & 4.188577 & \textbf{7.50} \\ \hline
			$\omega_{2,h}$&5.515782 & 5.517571 & 5.517581 & \textbf{7.49} \\ \hline
			$\omega_{3,h}$&5.515718 & 5.517572 & 5.517581 & \textbf{7.63} \\ \hline
			$\omega_{4,h}$&6.539853 & 6.543344 & 6.543362 & \textbf{7.56} \\ \hline
			$\omega_{5,h}$&7.131011 & 7.137495 & 7.137527 & \textbf{7.64} \\ \hline
			\multicolumn{5}{|c|}{$\nu=0.4999$}\\ \hline
			$\omega_{1,h}$&4.176665 & 4.177117 & 4.177119 & \textbf{7.85} \\ \hline
			$\omega_{2,h}$&5.539595 & 5.541463 & 5.541473 & \textbf{7.54} \\
			\hline
			$\omega_{3,h}$&5.539523 & 5.541464 & 5.541473 & \textbf{7.74} \\
			\hline
			$\omega_{4,h}$&6.533747 & 6.537305 & 6.537324 & \textbf{7.57} \\ \hline
			$\omega_{5,h}$&7.160882 & 7.167588 & 7.167621 & \textbf{7.67} \\ \hline
		\end{tabular}
	\end{center}
\end{table}

In Example \ref{e1}, the eigenfuntions corresponding to first 5 eigenvalues are smooth. As we can see from Table \ref{t1} and \ref{t2}, the convergence rate of eigenvalues with polynomial degree $k$=1, 2 are approximately 2$(k-\delta)$, which concides with Theorem \ref{16}. Furthermore, in Table \ref{t1}, by setting the mesh size $h=(2H)^2$, which means the conditions of Theorem \ref{17} are satisfied, we manage to obtain the lower bounds for eigenvalues. To our surprise, as shown in Table \ref{t2}, although the choince of $k=2$ and $\delta=0.1$ do not satisfy the conditions of Theorem \ref{17}, the two-grid WG method still manages to provide lower bounds for eigenvalues. Additionally, by Table \ref{t3}, We can find the eigenvalue approximations of Algorithm \ref{32} reach the optimal convergence orderare all lower bounds of the exact eigenvalues.

\begin{table}[H]
	\begin{center}
		\caption{$ECR$ method}\label{t3}
		\begin{tabular}{|c|c|c|c|c|c|c|}\hline
			$h$&1/16&1/32&1/64&1/128&1/256&Order\\ \hline\hline
			\multicolumn{7}{|c|}{$\nu=0.49$}\\ \hline
			$\omega_{1,h}$&4.163418 & 4.181869 & 4.186856 & 4.188142 & 4.188468 & \textbf{1.98} \\ \hline
			$\omega_{2,h}$&5.436176 & 5.496398 & 5.512192 & 5.516223 & 5.517241 & \textbf{1.99} \\ \hline
			$\omega_{3,h}$&5.445379 & 5.498658 & 5.512754 & 5.516364 & 5.517276 & \textbf{1.98} \\ \hline
			$\omega_{4,h}$&6.437887 & 6.515161 & 6.536119 & 6.541532 & 6.542903 & \textbf{1.98} \\ \hline
			$\omega_{5,h}$&6.962361 & 7.092108 & 7.125983 & 7.134619 & 7.136798 & \textbf{1.99} \\ \hline
		\multicolumn{7}{|c|}{$\nu=0.4999$}\\ \hline
			$\omega_{1,h}$&4.151835 & 4.170450 & 4.175484 & 4.176782 & 4.177111 & \textbf{1.98} \\ \hline
			$\omega_{2,h}$&5.463024 & 5.520883 & 5.536107 & 5.539997 & 5.540979 & \textbf{1.99} \\ \hline
			$\omega_{3,h}$&5.469977 & 5.522596 & 5.536533 & 5.540104 & 5.541006 & \textbf{1.98} \\ \hline
			$\omega_{4,h}$&6.432080 & 6.509186 & 6.530137 & 6.535550 & 6.536922 & \textbf{1.98} \\ \hline
			$\omega_{5,h}$&6.995574 & 7.122888 & 7.156087 & 7.164550 & 7.166685 & \textbf{1.99} \\ \hline
		\multicolumn{7}{|c|}{$\nu=0.499999$}\\ \hline
			$\omega_{1,h}$&4.151720 & 4.170337 & 4.175372 & 4.176999 & 4.176998 & \textbf{1.98} \\ \hline
			$\omega_{2,h}$&5.463233 & 5.521072 & 5.536290 & 5.541161 & 5.541157 & \textbf{1.99} \\ \hline
			$\omega_{3,h}$&5.470167 & 5.522780 & 5.536715 & 5.541188 & 5.541184 & \textbf{1.98} \\ \hline
			$\omega_{4,h}$&6.432019 & 6.509123 & 6.530074 & 6.536859 & 6.536857 & \textbf{1.98} \\ \hline
			$\omega_{5,h}$&6.995853 & 7.123141 & 7.156333 & 7.164795 & 7.166929 & \textbf{1.99} \\ \hline
		\end{tabular}
	\end{center}
\end{table}

In Example \ref{e2} and \ref{e3}, although some eigenfunctions are sigular, we can see from Table \ref{t4} and \ref{t5}, the two-grid WG method still provides the lower bounds for eigenvalues with polynomial degree $k$=1, 2 and mesh size $h=(2H)^2$. What's more, since there is no limitation for $s$ in Lemma \ref{22}, it is reasonable to get lower bounds for eigenvalues by Algorithm \ref{31} even though the corresponding eigenfunctions have low regularity.

\begin{example}\label{e2}
	Consider the linear elastic eigenvalue problem \eqref{eig} on unit square domain $\Omega=(0,1)^2$ with $\Gamma_D=\{(x,0):0\le x\le1\}$. We set tthe Poisson ratio $\nu$=0.49, 0.4999, 0.499999, and solve it by Algorithm \ref{31}. The corresponding results are shown in table \ref{t4}-\ref{t5}.
\end{example}
\begin{table}[H]
	\begin{center}
		\caption{$WG$ method, k=1}\label{t4}
		\begin{tabular}{|c|c|c|c|c|}\hline
			$H$&1/8&1/16&1/32&Trend\\ \hline
			$h$&1/16&1/64&1/256&\\ \hline\hline
			\multicolumn{5}{|c|}{$\nu=0.49$}\\ \hline
			$\omega_{1,h}$&0.684447 & 0.696457 & 0.698899 & $\nearrow$ \\ \hline
			$\omega_{2,h}$&1.812638 & 1.831746 & 1.836036 & $\nearrow$ \\ \hline
			$\omega_{3,h}$&1.837057 & 1.859167 & 1.860697 & $\nearrow$ \\ \hline
			$\omega_{4,h}$&2.858137 & 2.914030 & 2.924182 & $\nearrow$ \\ \hline
			$\omega_{5,h}$&3.000610 & 3.040205 & 3.043163 & $\nearrow$ \\ \hline
			\multicolumn{5}{|c|}{$\nu=0.4999$}\\ \hline
			$\omega_{1,h}$&0.685879 & 0.698313 & 0.701075 & $\nearrow$ \\ \hline
			$\omega_{2,h}$&1.822929 & 1.842612 & 1.847548 & $\nearrow$ \\ \hline
			$\omega_{3,h}$&1.841151 & 1.863890 & 1.865497 & $\nearrow$ \\ \hline
			$\omega_{4,h}$&2.852689 & 2.908394 & 2.919911 & $\nearrow$ \\ \hline
			$\omega_{5,h}$&3.008566 & 3.048110 & 3.051157 & $\nearrow$ \\ \hline
			\multicolumn{5}{|c|}{$\nu=0.499999$}\\ \hline
			$\omega_{1,h}$&0.685894 & 0.698333 & 0.701101 & $\nearrow$ \\ \hline
			$\omega_{2,h}$&1.823032 & 1.842722 & 1.847663 & $\nearrow$ \\ \hline
			$\omega_{3,h}$&1.841194 & 1.863939 & 1.865548 & $\nearrow$ \\ \hline
			$\omega_{4,h}$&2.852635 & 2.908339 & 2.919861 & $\nearrow$ \\ \hline
			$\omega_{5,h}$&3.008646 & 3.048190 & 3.051238 & $\nearrow$ \\ \hline	
		\end{tabular}
	\end{center}
\end{table}

\begin{table}[H]
	\begin{center}
		\caption{$WG$ method, k=2}\label{t5}
		\begin{tabular}{|c|c|c|c|c|}\hline
			$H$&1/8&1/16&1/32&Trend\\ \hline
			$h$&1/16&1/64&1/256&\\ \hline\hline
			\multicolumn{5}{|c|}{$\nu=0.49$}\\ \hline
			$\omega_{1,h}$&0.696634 & 0.698242 & 0.698959 & $\nearrow$ \\ \hline
			$\omega_{2,h}$&1.832686 & 1.835234 & 1.836336 & $\nearrow$ \\ \hline
			$\omega_{3,h}$&1.860501 & 1.860784 & 1.860811 & $\nearrow$ \\ \hline
			$\omega_{4,h}$&2.922405 & 2.925409 & 2.926599 & $\nearrow$ \\ \hline
			$\omega_{5,h}$&3.043038 & 3.043359 & 3.043388 & $\nearrow$ \\ \hline
			\multicolumn{5}{|c|}{$\nu=0.4999$}\\ \hline
			$\omega_{1,h}$&0.698486 & 0.700952 & 0.701445 & $\nearrow$ \\ \hline
			$\omega_{2,h}$&1.843553 & 1.847477 & 1.848255 & $\nearrow$ \\ \hline
			$\omega_{3,h}$&1.865210 & 1.865560 & 1.865565 & $\nearrow$ \\\hline
			$\omega_{4,h}$&2.915930 & 2.921036 & 2.922218 & $\nearrow$ \\ \hline
			$\omega_{5,h}$&3.050911 & 3.051287 & 3.051293 & $\nearrow$ \\ \hline
		\end{tabular}
	\end{center}
\end{table}

\begin{example}\label{e3}
	Consider the linear elastic eigenvalue problem \eqref{eig} on L-shaped  domain $\Omega=(0,2)^2/(1,2)^2$ with $\Gamma_N=\phi$. We set the Poisson ratio $\nu$=0.49, 0.4999, 0.499999, and solve it by Algorithm \ref{32}. The corresponding results are shown in table \ref{t6}.
\end{example}
\begin{table}[H]
	\begin{center}
		\caption{}\label{t6}
		\begin{tabular}{|c|c|c|c|c|c|c|}\hline
			$h$&1/16&1/32&1/64&1/128&1/256&Trend\\ \hline\hline
		\multicolumn{7}{|c|}{$\nu=0.49$}\\ \hline
			$\omega_{1,h}$&3.161484 & 3.227793 & 3.252536 & 3.262002 & 3.265820 & $\nearrow$ \\ \hline
			$\omega_{2,h}$&3.400566 & 3.477013 & 3.499606 & 3.506059 & 3.507882 & $\nearrow$ \\ \hline
			$\omega_{3,h}$&3.628877 & 3.693181 & 3.710797 & 3.715520 & 3.716802 & $\nearrow$ \\ \hline
			$\omega_{4,h}$&3.944009 & 4.015067 & 4.035357 & 4.040783 & 4.042192 & $\nearrow$ \\ \hline
			$\omega_{5,h}$&4.060406 & 4.169453 & 4.200536 & 4.209421 & 4.212065 & $\nearrow$ \\ \hline
		\multicolumn{7}{|c|}{$\nu=0.4999$}\\ \hline
			$\omega_{1,h}$&3.161870 & 3.230057 & 3.255689 & 3.265574 & 3.269595 & $\nearrow$ \\ \hline
			$\omega_{2,h}$&3.405130 & 3.481293 & 3.503792 & 3.510213 & 3.512022 & $\nearrow$ \\ \hline
			$\omega_{3,h}$&3.657156 & 3.716070 & 3.732610 & 3.737113 & 3.738361 & $\nearrow$ \\ \hline
			$\omega_{4,h}$&3.942279 & 4.013079 & 4.033424 & 4.038885 & 4.040309 & $\nearrow$ \\ \hline
			$\omega_{5,h}$&4.169090 & 4.258356 & 4.285648 & 4.293773 & 4.296312 & $\nearrow$ \\ \hline
		\multicolumn{7}{|c|}{$\nu=0.499999$}\\ \hline
			$\omega_{1,h}$&3.161864 & 3.230065 & 3.255704 & 3.265593 & 3.269616 & $\nearrow$ \\ \hline
			$\omega_{2,h}$&3.405161 & 3.481321 & 3.503820 & 3.510240 & 3.512049 & $\nearrow$ \\ \hline
			$\omega_{3,h}$&3.657243 & 3.716136 & 3.732671 & 3.737173 & 3.738421 & $\nearrow$ \\ \hline
			$\omega_{4,h}$&3.942252 & 4.013050 & 4.033395 & 4.038857 & 4.040281 & $\nearrow$ \\ \hline
			$\omega_{5,h}$&4.169254 & 4.258503 & 4.285791 & 4.293915 & 4.296454 & $\nearrow$ \\ \hline
		\end{tabular}
	\end{center}
\end{table}
	
	\bibliographystyle{siam}
\bibliography{library}
\end{document}